\newcommand{\R}{\mathbb{R}}
\newtheorem{theorem}{Theorem}[section]
\newtheorem{lemma}[theorem]{Lemma}
\newtheorem{corollary}[theorem]{Corollary}
\newtheorem{prop}[theorem]{Proposition}
\newtheorem{remark}{Remark}
\newtheorem{example}{Example}
\newcommand{\veps}{\varepsilon}
\newcommand{\eps}{\epsilon}
\DeclareMathOperator{\Tr}{Tr}
\author[D. Ghilli]{Daria Ghilli$^*$}
\address{$^*$ Daria Ghilli: 
 Department of Economics and Management, University of Pavia, Via S. Felice al Monastero 5, 27100, Pavia, ITALIA}
\email{daria.ghilli@unipv.it}
\author[C. Marchi]{Claudio Marchi$^\S$}
\address{$^\S$ Claudio Marchi: Dipartimento di Ingegneria dell'Informazione \& Dipartimento di matematica ``Tullio Levi Civita'', 
Università degli studi di Padova, Via Gradenigo 6/B, 35131 Padova, ITALIA}
\email{marchi@math.unipd.it}
\title[Rate of convergence for singular perturbations of HJ equations in unbounded spaces]{Rate of convergence for singular perturbations of Hamilton-Jacobi equations in unbounded spaces}
\begin{document}

\date{\today}

\begin{abstract} 
We prove rate of convergence results for singular perturbations of Hamilton-Jacobi equations in unbounded spaces where the fast operator is linear, uniformly elliptic and has an Ornstein-Uhlenbeck-type drift. The slow operator is a fully nonlinear elliptic operator while the source term is assumed only locally H\"older continuous in both fast and slow variables. We obtain several rates of convergence according on the regularity of the source term.
\end{abstract}

\maketitle
\date{\today}

{\bf Keywords:} 
rate of convergence,
singular perturbations in unbounded spaces,
viscosity solutions,
nonlinear elliptic equations,
Ornstein-Uhlenbeck operators.
\bigskip

{\bf AMS Classification:} 
35B25,
35D40,
34K26,
49L25,
35J60,
35J70,
35R60
\bigskip 


\section{Introduction}
This paper is devoted to studying the limit behaviour as $\eps \to 0$ of nonlinear Hamilton-Jacobi equation of the form
\begin{equation}\label{eq:HJ-eps3}
u^\veps(x,y) + H\left(x,D_xu^\veps,D^2_{xx}u^\veps\right) +\frac{1}{\veps}\mathcal{L}\left(y, D_y u^\veps, D^2_{yy} u^\veps\right)+f(x,y)= 0 \qquad\textrm{in }\R^n\times\R^m
\end{equation}
where $H$ is a degenerate elliptic Hamiltonian, $f$ is a bounded locally H\"older continuous function and $\mathcal{L}$ is a linear operator
\begin{equation*}
\mathcal{L}\left(y, q, Y\right):=-\Tr\left(\tau(y)\tau(y)^TY\right)+b_0(y)\cdot q
\end{equation*}
(see below for more precise assumptions).
The study of the convergence as $\eps \to 0$ of the solutions $u^\eps$ to the equation \eqref{eq:HJ-eps3} is a singular perturbation problem in the whole space $\R^n \times \R^m$.

Singular perturbation problems for Hamilton-Jacobi equations have been thoroughly studied in the past years (see, for instance, \cite{AB3, N02, KP03} and references therein); the description of the whole literature goes beyond the purpose of the present paper. The PDE-approach to these problems consists in characterizing the value function~$u^\veps$ as a solution to a fully nonlinear PDE as in~\eqref{eq:HJ-eps3} and identifying its limit as $\veps \to0$ as the (unique) solution~$u$ of a limiting PDE. The theory of viscosity solution is the natural framework for this approach; we refer the reader to~\cite {BCD, CIL92} for an overview on this notion of solutions. In this framework, the ideas and methods for singular perturbations stem from the ones for periodic homogenisation problems started with the seminal papers~\cite{LPV, Evans,Ev2}.
According to this theory for singular perturbations, one expects that the solution to~\eqref{eq:HJ-eps3} converges locally uniformly to the solution~$u$ to the {\it effective} problem
\begin{equation}\label{eq:effprob}
u +H\left(x, D_xu, D^2_{xx}u\right)+\lambda(x)=0,
\end{equation}
where 
$\lambda(x)$ is the {\it ergodic constant} of the {\it cell problem}
\begin{equation}\label{7intro}
\mathcal{L}\left(y, D_yw, D^2_{yy}w\right)+f(x,y)=\lambda(x) \quad \mbox{ in } \R^m
\end{equation}
(incidentally, we mention that this equation also arises when the long time behaviour of the corresponding time dependent equation is examined).
It is also well known (see for instance, \cite{LPV}, \cite[Chapter 7]{AB3}, \cite{Ic11}) that existence and uniqueness of the solution $(w,\lambda)$ are issues to overcome. Hence, the first step is to solve problem~\eqref{7intro}, namely to prove that for each $x\in \R^n$ there exists a couple $(w,\lambda(x))$, with $w=w(y;x)$ such that problem~\eqref{7intro} is fulfilled, $\lambda(x)$ is uniquely determined and $w(\cdot;x)$ is uniquely determined up to an additive constant (in other words, there exists a unique $w$ such that $w(0;x)=0$).

It is well-known that the operator $\mathcal{L}$ is the infinitesimal generator of the following stochastic system
$$
dY_t=b_0(Y_t)dt+\sqrt{2}\tau(Y_t)dW_t,
$$
where $W_t$ is a standard $m$-dimensional Brownian motion.
Ergodicity of the process $Y_t$ is a key property that we need throughout the paper and which cannot be guaranteed by general diffusions $\tau$ and general drifts~$b_0$.
For this reason we focus our attention on the case where $\tau\tau^T$ is a positive definite matrix and the drift has the form
\begin{equation*}
b_0(y)=\alpha y + b(y),
\end{equation*}
where $\tau$ and $b$ are bounded, Lipschitz continuous, functions and $\alpha$ is a positive constant. Note that this situation encompasses the well-known Ornstein-Uhlenbeck process for the random motion of a particle under the influence of friction and several stochastic models for financial markets for which the periodic assumption is too restrictive (see \cite{FIL, BGL, FPS, BCG15, Ghilli18} and references therein). More specifically, under these assumptions, there exists a Lyapounov function for~$\mathcal{L}$, that is  a function $\chi$ (take for example $\chi(y)=|y|^2)$ such that
\begin{equation}\label{lyapounov}
\mathcal{L}\left(y, D_y\chi, D^2_{yy}\chi\right)\to + \infty \quad \mbox{ as }\, |y|\to \infty
\end{equation}
and a unique invariant measure $\mu$ for the process $Y_t$. Moreover, by these properties, the solution~$(w,\lambda)$ to~\eqref{7intro} is obtained as follows: $w$ is the limit as $\delta\to0$ of $u_\delta(\cdot)-u_\delta(0)$ where $u_\delta=u_\delta(y)$ is the unique bounded solution to
\begin{equation}\label{7intro_d}
\delta u_\delta +\mathcal{L}\left(y, D_yw, D^2_{yy}w\right)+f(x,y)=0 \quad \mbox{ in } \R^m
\end{equation}
while $\lambda$ is given by
\begin{equation}\label{lambda_intro}
\lambda(x)=\lim_{\delta\to 0} \delta u_\delta(0)=\int_{\R^m}f(x,y)\, d\mu(y)
\end{equation}
and $\mu$ is the invariant measure associated to process $Y_t$.

We recall that in~\cite{Ghilli18} the first author obtained the convergence for a class of singular perturbation problems in the whole space when the drift of the fast variables is of Ornstein-Uhlenbeck type. More precisely, she tackled the case of more general~$H$ but with constant matrix~$\tau$ and  Lipschitz continuous source term in the cell problem. This result was extended in~\cite{MMT18} to singular perturbations problems when the dynamics of the fast variables evolve in the whole space and ~$\mathcal{L}$ is a subelliptic operator and the source term is still Lipschitz continuous. We also mention that ergodic problems for viscous HJ equations with superlinear growth and inward-pointing drifts have been studied in~\cite{M14, CI19}. We refer the reader also to the paper~\cite{FIL} for the connection with the long time behaviour of solutions of the Cauchy problem for semilinear parabolic equations with the Ornstein-Uhlenbeck operator in the whole $\R^m$.

On the other hand, Capuzzo Dolcetta and Ishii~\cite{CDI01} provided the first result on the rate of convergence (namely, an estimate of $\|u^\veps-u\|_\infty$) for periodic homogenization of first order equations. Afterwards, their techniques were extended also to homogenization of second order equations, still under periodic assumption (see \cite{CM09, CM11, CCM11, Marchi14, KL16,RPT20}; see also \cite{CS10} for different techniques for the stationary ergodic case). However, up to our knowledge, no rate of convergence result for singular perturbation of second order HJ problems are available, even for the periodic case.

The main purpose of this paper is to obtain an estimate of the rate of convergence of~$u^\veps$ to~$u$, namely an estimate of $|u^\veps(x,y)-u(x)|$. As a byproduct we deduce that $u^\veps$ converges locally uniformly to $u$.
Several intermediate steps for achieving these two purposes are new and, in our opinion, they have an independent interest; in particular, we shall obtain the following results:  $(i)$ we establish the H\"older continuity of the solution to~\eqref{7intro_d} independently of $\delta $ and of the ellipticity of $\tau\tau^T$ (see Theorem~\ref{holder}), $(ii)$ we solve the cell problem~\eqref{7intro} on the whole space $\R^m$ with a source term~$f$ only locally H\"older continuous and with a noncostant matrix~$\tau$ (see Proposition~\ref{prop:nota}), $(iii)$ we establish a continuous dependence result in $x$ of the solution to~\eqref{7intro} with $w(0;x)=0$, namely and estimate for $|w(\cdot;x_1)-w(\cdot;x_2)|$ (see Proposition~\ref{prop:stimaccia}).

The main achievement of the present paper, stated in Theorem~\ref{thm:rategc}, is a rate of convergence results of $u_\eps$ to $u$ of the following type: under some compatibility condition between~$\alpha$ and the coefficients of~$\mathcal L$ (see assumption~$(H2)$ below), for every compact $\mathcal{K}$ of $\R^m$, there exists a constant $K$ (independent of $\eps$) such that for $\eps$ sufficiently small we have
$$
|u^\eps(x,y)-u(x)|\leq K\left(\eps\left|\log\eps\right|\right)^{\frac{\beta}{2}} \quad \forall (x,y) \in \R^n \times \mathcal{K},
$$
where  $\beta \in (0,1]$ is the H\"older exponent of the source~$f$.
 We remark that, for~$\tau$ and~$b$ constant, the compatibility assumption~$(H2)$ reduces to: $\alpha>0$; hence, our result applies to standard Ornstein-Uhlenbeck operators.
Moreover, if the source~$f$ has separated variables, namely $f(x,y)=h(x)g(y)$, we can drop assumption~$(H2)$ and we identify two subcases depending on the degree of regularity of the function $h$: in Theorem~\ref{thm:ratehlip}, for $h$ Lipschitz, we prove that for $\eps$ sufficiently small it holds
$$
|u^\eps(x,y)-u(x)|\leq K\left(\eps\left|\log \eps\right|\right)^{\frac{1}{2}} \quad \forall (x,y) \in \R^n \times \mathcal{K},
$$
while in Theorem~\ref{thm:ratehreg} for $h \in C^2(\R^n)$ we prove that for $\eps$ sufficiently small it holds
$$
|u^\eps(x,y)-u(x)|\leq K\eps\left|\log\eps\right| \quad \forall (x,y) \in \R^n \times \mathcal{K}.
$$

Our techniques will be in part inspired by the methods used for the rate of convergence in the periodic homogenization (see \cite{CM11, CM09, CCM11}) combined with the methods developed in \cite{FIL, Ghilli18} to solve singular perturbation problems in all the space. They strongly rely on the existence of a Lyapounov function, on a logarithmic growth of $w$ in $y$ and on the continuous dependence of $w$ in $x$. 

An illustrating case that we have in mind arises for stochastic optimal control problem: the operator~$H$ in \eqref{eq:HJ-eps3}  is of the following type
\begin{equation}\label{H_control}
H(x, p, X)=\min_{u \in U} \left\{-\phi(x,u)\cdot p-\mbox{trace}\left[\sigma(x,u)\sigma^T(x,u)X\right]\right\},
\end{equation}
where  $U$ is  a compact metric set while $\phi$ and $\sigma$ are bounded Lipschitz continuous functions. In this case the unique bounded solution to equation \eqref{eq:HJ-eps3} is characterized as the value function
$$
u^\veps(x,y)=\sup_{u \in \mathcal{U}}\mathbb{E}\left[\int_0^{+\infty} -e^{-t} f(X_t, Y_t)\,dt\right],
$$
where $\mathbb{E}$ denotes the expectation, $\mathcal{U}$ is the set of progressively measurable processes with values in $U$ and
\begin{equation}\label{esempio_dyn}
\left\{\begin{array}{lll}
(i)&\quad dX_t=\phi(X_t,u_t)dt+\sqrt{2}\sigma(X_t,u_t)dW_t,&\quad X_0=x \in \R^n,\\
(ii)&\quad dY_t=\frac{1}{\veps}\left[\alpha Y_t+b(Y_t)\right]dt+\sqrt{\frac{2}{\veps}}\tau(Y_t)dW_t, &\quad Y_0=y \in \R^m.
\end{array}\right.
\end{equation}
In this case the value function $u^\eps$ converges to the unique bounded solution~$u$ to the effective problem \eqref{eq:effprob} which in turns can be expressed as the value function
$$
u(x)=\sup_{u \in \mathcal{U}}\mathbb{E}\left[\int_0^{+\infty} -e^{-t} \lambda(X_t)\,dt\right]
$$
where the process $X_t$ still obeys to~\eqref{esempio_dyn}-(i). In this example, as $\veps\to 0$, the dynamics do not depend any more on the fast variable $Y$ and the cost $f$ is replaced by $\lambda$ which is its average with respect to the invariant measure~$\mu$, see~\eqref{lambda_intro}.

Another motivation for our study has been the papers \cite{BCG15, Ghilli18}, where singular perturbations problems 
where studied for their applications to large deviations, pricing of options near maturity and asymptotic formula for implied volatility. Nevertheless we remark that the equations studied in \cite{BCG15, Ghilli18} are more general than ours: 
 the study of more general singular perturbations problems (as the ones in \cite{BCG15, Ghilli18}) will be subject of future research.

The paper is structured as follows. In Section \ref{sec:pre} we give the main assumptions and set some notations. In Section~\ref{sec:holder}, for locally H\"older continuous cost~$f$, we prove the local H\"older regularity of the solution of the approximating ergodic problem~\eqref{7intro_d}, we solve the ergodic problem~\eqref{7intro} also obtaining a continuous dependence result for its solution w.r.t.~$x$ and, mainly, we establish the rate of convergence in Theorem \ref{thm:rategc}. In Section \ref{sec:partcases} we consider the particular cases in which the cost $f$ has separated variables and, depending on its regularity in $x$, we prove better rate of convergence (w.r.t. Theorem~\ref{thm:rategc}) in Theorem~\ref{thm:ratehlip} and in Theorem~\ref{thm:ratehreg}.

\section{Standing assumptions and notations}\label{sec:pre}
Throughout this paper, unless otherwise explicitly stated,  we shall assume the following
\begin{itemize}
\item[$(C)$] The comparison principle holds for \eqref{eq:HJ-eps3}.
\item[$(L)$] The operator $\mathcal{L}$ has the following form
\[
\mathcal{L}(y, q, Y):=-\mbox{tr}\left(\tau(y)\tau(y)^TY\right)+\alpha y\cdot q+b(y)\cdot q,
\]
where $\tau$ has bounded Lipschitz continuous coefficients with Lipschitz constant $L_\tau$, $b$ is bounded and Lipschitz continuous with Lipschitz constant $L_b$ and $\alpha>0$. Moreover we assume that $\tau \tau^T$ is uniformly non degenerate, that is, there exists $\theta >0$ such that
\begin{equation}\label{eq:tauund}
\xi\tau(y)\tau(y)^T \xi^T\geq \theta |\xi|^2, \quad \forall y \in \R^m, \xi \in \R^m.
\end{equation}	
\item[$(H)$] There exists $C>0$ such that, for all $x, p \in \R^n, X, Y \in \mathbb{S}^n$
$$H(x,0,0)\leq C,$$
$$
H\left(x, p, X\right)\leq H\left(x, p, Y\right) \mbox{ for } X\geq Y,
$$
$$
\left|H\left(x,p,X\right)-H\left(y,q,Y\right)\right|\leq C\left(|p-q|+\left|X-Y\right|\right)+C|x-y|\left(1+|p|+|X|\right).
$$
\item[$(F)$] The function $f$ is continuous and bounded: $\|f\|_\infty\leq C_1$.
\end{itemize}
The operator $\mathcal{L}$ is uniformly elliptic because of~\eqref{eq:tauund} while the operator~$H$ is only degenerate elliptic. We refer to~\cite{CIL92} for an overview for comparison principle for these operator.
\begin{example}
Consider the case where $H$ is the Hamiltonian associated to a stochastic optimal control problem as in~\eqref{H_control}. Assume that $\psi(x,u)$ is bounded and Lipschitz continuous in~$x$ uniformly in~$u$ where $\psi=\phi,\sigma$. Then assumptions~$(C)$ and~$(H)$ are verified; see~\cite{CIL92}.
\end{example}
\begin{lemma}\label{lemma:Exis_udelta}
There exists a unique bounded viscosity solution $u^\veps$ to problem~\eqref{eq:HJ-eps3}. Moreover there holds $\|u^\veps\|_\infty\leq C+C_1$.
\end{lemma}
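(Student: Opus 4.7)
The plan splits the lemma into uniqueness and existence. Uniqueness is immediate from assumption $(C)$: any two bounded viscosity solutions $u_1,u_2$ of \eqref{eq:HJ-eps3} satisfy $u_1\le u_2$ and $u_2\le u_1$ by the comparison principle, hence coincide. So the real content is existence together with the $L^\infty$-bound, and I would obtain both simultaneously through Perron's method using explicit constant barriers.

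First I would exhibit the barriers $u^+(x,y)\equiv C+C_1$ and $u^-(x,y)\equiv -(C+C_1)$. Since $Du^\pm=0$ and $D^2u^\pm=0$, one has $\mathcal L(y,0,0)=0$ by the very form of $\mathcal L$ in $(L)$, so plugging $u^+$ into \eqref{eq:HJ-eps3} gives
\begin{equation*}
u^+ + H(x,0,0) + \tfrac{1}{\veps}\mathcal L(y,0,0) + f(x,y) \;=\; (C+C_1) + H(x,0,0) + f(x,y) \;\ge\; 0,
\end{equation*}
where I use the bound on $H(\cdot,0,0)$ afforded by $(H)$ together with $\|f\|_\infty\le C_1$ from $(F)$; hence $u^+$ is a classical, and therefore viscosity, supersolution. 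A symmetric computation shows that $u^-$ is a subsolution.

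Next I would apply Ishii's version of Perron's method (see \cite{CIL92}) by setting
\begin{equation*}
u^\veps(x,y) := \sup\bigl\{\,w(x,y) : w \text{ is a viscosity subsolution of \eqref{eq:HJ-eps3} with } u^-\le w\le u^+\,\bigr\},
\end{equation*}
a nonempty set thanks to $u^-$. Standard arguments give that the upper semicontinuous envelope $(u^\veps)^*$ is a subsolution while the lower semicontinuous envelope $(u^\veps)_*$ is a supersolution, the latter via the usual local bump perturbation at any point where the supersolution property would fail. Invoking assumption $(C)$ to compare $(u^\veps)^*$ with $(u^\veps)_*$ yields $(u^\veps)^*\le (u^\veps)_*$, so $u^\veps$ is continuous and a viscosity solution of \eqref{eq:HJ-eps3} trapped between $u^-$ and $u^+$; this gives $\|u^\veps\|_\infty\le C+C_1$ and completes the proof.

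The argument is essentially routine; the only delicate point is the use of Perron's method in the unbounded space $\R^n\times\R^m$. The local bump construction for the supersolution property transfers verbatim from bounded domains, and the only global ingredient needed, namely comparison of bounded sub/supersolutions on the whole of $\R^n\times\R^m$, is precisely what is postulated in $(C)$; there is thus no genuine obstacle beyond verifying the barriers.
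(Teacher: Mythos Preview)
Your proof is correct and follows essentially the same route as the paper: exhibit the constant barriers $u^\pm=\pm(C+C_1)$, apply Perron's method as in \cite{CIL92} to produce a solution trapped between them, and invoke assumption~$(C)$ for uniqueness. You simply spell out in more detail the mechanics of Perron's construction and the role of comparison in closing it, but the argument is the same.
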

\begin{proof} We observe that the functions $u^\pm=\pm(C+C_1)$ are respectively a super- and a subsolution to problem~\eqref{eq:HJ-eps3}. Hence, a standard application of Perron's method (see for instance \cite{CIL92}) ensures the existence of a viscosity solution $u^\veps$ to problem~\eqref{eq:HJ-eps3} with $u^-\leq u^\veps\leq u^+$. On the other hand, the comparison principle guarantees the uniqueness of the solution.
\end{proof}

%

\section{H\"older continuous source term}\label{sec:holder}
In this section we assume the following hypotheses:
\begin{itemize}
\item[$(H1)$] the function $f$ is locally H\"older continuous in $y$ (uniformly in $x$): there exist $\gamma\in(0,1]$ and $C_2\in(0,\infty)$ such that
\[
|f(x,y_1)-f(x,y_2)|\leq C_2|y_1-y_2|^\gamma\left[\log\left(1+|y_1|^2\right)+\log\left(1+|y_2|^2\right)+1\right]\qquad\forall x\in\R^n,\, y_1,y_2\in\R^m;
\]
\item[$(H2)$] the parameter $\alpha$ satisfies $\alpha>L_b+L_\tau^2(m+2-\gamma)$, where $L_b$ and $L_\tau$ are defined in $(L)$;
\item[$(H3)$] There exist two positive constants  $C_3$ and $\beta\in(0,1]$ such that: for any $x,\bar x\in\R^n$, the function $F(\cdot):=f(x,\cdot)-f(\bar x,\cdot)$ fulfills
\[
|F(y_1)-F(y_2)|\leq |y_1-y_2|^\gamma C_3|x-\bar x|^\beta\left[\log\left(1+|y_1|^2\right)+\log\left(1+|y_2|^2\right)+1\right]\qquad\forall y_1,y_2\in\R^m;
\]
\item[$(H4)$]  There exist two positive constants  $C_3$ and $\beta\in(0,1]$ such that: for any $x,\bar x\in\R^n$, the function $F(\cdot):=f(x,\cdot)-f(\bar x,\cdot)$ fulfills
\[
\|F\|_\infty\leq C_3|x-\bar x|^\beta.
\]
\end{itemize}

\begin{remark}
We remark that assumption~$(H2)$ covers the case of the standard Ornstein-Uhlenbeck operator, that is when $\tau$ and $b$ are constant; 
actually in this case $(H2)$ reduces to $\alpha >0$. 
However, in Section \ref{sec:partcases} we drop this assumption for the case of $f(x,y)=h(x)g(y)$ Lipschitz continuous.
\end{remark}
This section is organized as follows: in subsection \ref{subsec:cellp1} we analyse the cell problem while in subsection~\ref{subsec:effect} we establish some properties of the effective problem. 
Finally subsection~\ref{subsec:rateconv} contains our main result on the rate of convergence, Theorem \ref{thm:rategc}.

\subsection{The cell problem}\label{subsec:cellp1}

For each $x\in\R^n$, we consider the cell problem
\begin{equation}\label{eq:cellpb3}
\mathcal{L}\left(y, D_yw, D^2_{yy}w\right)+f(x,y)=\lambda(x), \quad w(0)=0 \quad \mbox{ in } \R^m.
\end{equation}
In order to study this problem, it is expedient to introduce the approximating cell problem
\begin{equation}\label{delta_ergo}
\delta u_\delta(y)+\mathcal{L}\left(y, D_y u_\delta, D^2_{yy}u_\delta\right)=F(y)\qquad\textrm{in }\R^m,
\end{equation}
where, by assumptions $(H1)$ and $(F)$, the source~$F$ satisfies
\begin{itemize}
\item[$(F1)$] $||F(\cdot)||_\infty\leq K_F$, 
\item[$(F2)$] $|F(y_1)-F(y_2)|\leq C_F|y_1-y_2|^\gamma\left[\log\left(1+|y_1|^2\right)+\log\left(1+|y_2|^2\right)+1\right]$, $\forall y_1,y_2\in\R^m$,
\end{itemize}
for some constants $K_F, C_F>0, \gamma \in (0,1]$.

In the following theorem we prove the well-posedness of problem~\eqref{delta_ergo} and, mainly, a regularity estimate for its solution. This regularity property will be crucial in the proof of three results: $(i)$ the cell problem~\eqref{eq:cellpb3} has a solution~$w$ (see Proposition~\ref{prop:nota}), $(ii)$ $w$ has a logarithmic growth at infinity (see Lemma~\ref{estchiholder}), $(iii)$ a continuous dependence estimate of~$w$ with respect to~$x$. The proof is postponed to Appendix A.

\begin{theorem}\label{holder}
Assume $(F1), (F2)$ and $(H2)$.
Then:
\begin{itemize}
\item[$i)$] there exists a unique bounded (viscosity) solution~$u_\delta$ to problem~\eqref{delta_ergo}; moreover there holds: $\delta\|u_\delta\|_\infty\leq K_F$ for any $\delta\in(0,1]$.
\item[$ii)$] the unique solution $u_\delta$ found in point~$i)$ satisfies
\begin{equation*}
|u_\delta(y_1)-u_\delta(y_2)|\leq K_1|y_1-y_2|^\gamma\left[\log\left(1+|y_1|^2\right)+\log\left(1+|y_2|^2\right)+K_2\right],\qquad \forall y_1, y_2 \in \R^m,
\end{equation*}
for some positive constants $K_1$ and $K_2$ which depend only on $m, C_F, \gamma, \alpha, \tau$ and $b$ (and are independent of $\delta$) and $K_1$ has a linear dependence in~$C_F$.
\end{itemize}
\end{theorem}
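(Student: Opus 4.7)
For part $(i)$, I would proceed by Perron's method plus comparison. Since $\mathcal{L}(y,0,0)=0$ and $\|F\|_\infty\le K_F$, the constants $\pm K_F/\delta$ are a super- and a subsolution, so Perron's method produces a bounded viscosity solution sandwiched between them (in particular $\delta\|u_\delta\|_\infty\le K_F$). Uniqueness reduces to the comparison principle for~\eqref{delta_ergo}, which holds because of the uniform ellipticity~\eqref{eq:tauund} together with the inward-pointing drift $\alpha y+b(y)$: the standard doubling-of-variables comparison argument can be localized using the Lyapunov function $\chi(y)=|y|^2$, which satisfies~\eqref{lyapounov} under $(L)$.

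The heart of the matter is part $(ii)$. My plan is a doubling of variables with a logarithmically weighted H\"older penalty. Setting
\[
\Phi(y_1,y_2):=u_\delta(y_1)-u_\delta(y_2)-\phi(y_1,y_2),\qquad \phi(y_1,y_2):=K_1|y_1-y_2|^\gamma\bigl[\log(1+|y_1|^2)+\log(1+|y_2|^2)+K_2\bigr],
\]
with $K_1,K_2>0$ to be chosen (and $K_1$ linear in $C_F$), the boundedness of $u_\delta$ together with $\phi\to+\infty$ as $|y_1|+|y_2|\to\infty$ along sequences with $|y_1-y_2|$ bounded away from zero ensures that either $M:=\sup_{\R^m\times\R^m}\Phi\le0$ (the desired conclusion) or $M>0$ is attained at some $(\bar y_1,\bar y_2)$ with $\bar y_1\neq\bar y_2$. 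In the latter case the Crandall--Ishii--Lions theorem of sums yields symmetric matrices $X,Y$ satisfying the usual inequality involving $D^2\phi(\bar y_1,\bar y_2)$, which, inserted into the viscosity inequalities for $u_\delta$ at $\bar y_1$ and $\bar y_2$ and subtracted (using $\delta(u_\delta(\bar y_1)-u_\delta(\bar y_2))\ge0$ at the positive maximum), give
\[
\mathcal{L}(\bar y_1,D_{y_1}\phi,X)-\mathcal{L}(\bar y_2,-D_{y_2}\phi,Y)\le F(\bar y_1)-F(\bar y_2).
\]

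The decisive step is to estimate each term. Hypothesis $(F2)$ dominates the right-hand side by a constant times $C_F|\bar y_1-\bar y_2|^\gamma[\log(1+|\bar y_1|^2)+\log(1+|\bar y_2|^2)+1]$, which has the same structural form as $\phi/K_1$. On the left-hand side, the good Ornstein--Uhlenbeck term $\alpha(\bar y_1\cdot D_{y_1}\phi+\bar y_2\cdot D_{y_2}\phi)$ is, after direct differentiation, at least $\alpha\gamma K_1|\bar y_1-\bar y_2|^\gamma[\log(1+|\bar y_1|^2)+\log(1+|\bar y_2|^2)+K_2]$, plus further nonnegative pieces coming from differentiating the logarithms; the Lipschitz perturbation $b$ contributes a term bounded by $L_b$ times the same quantity (via $|b(\bar y_1)-b(\bar y_2)|\le L_b|\bar y_1-\bar y_2|$); and the difference of trace terms, controlled using the matrix inequality together with $|\tau(y_1)-\tau(y_2)|\le L_\tau|y_1-y_2|$ and the explicit Hessians of $|\cdot|^\gamma$ and $\log(1+|\cdot|^2)$, is dominated by $L_\tau^2(m+2-\gamma)$ times the same quantity. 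This is precisely where assumption $(H2)$ enters: the strict inequality $\alpha-L_b-L_\tau^2(m+2-\gamma)>0$ leaves a positive margin in which one may choose $K_1$ proportional to $C_F$ divided by this margin and $K_2$ large enough to absorb the remaining lower-order terms, contradicting $M>0$.

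The main obstacle is the rigorous execution of this last estimate, especially when $\bar y_1$ is close to $\bar y_2$: the function $|y_1-y_2|^\gamma$ fails to be $C^2$ on the diagonal, so I would work first with the regularization $(|y_1-y_2|^2+\eta)^{\gamma/2}$, apply the theorem of sums and carry out the trace computation carefully (the factor $m+2-\gamma$ emerges from the combination of $D^2|y_1-y_2|^\gamma$ with the Hessian of the logarithmic weight and the trace of $\tau\tau^T$), and then pass to the limit $\eta\to0$. The remaining book-keeping is to verify that the constants indeed come out independent of $\delta$ with $K_1$ scaling linearly in $C_F$, as required by the statement.
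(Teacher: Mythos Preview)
Your proposal is correct and matches the paper's approach: both use doubling of variables with the logarithmically weighted H\"older penalty, argue by contradiction at a maximum point, and invoke $(H2)$ to produce the positive margin $\alpha-L_b-L_\tau^2(m+2-\gamma)>0$ that allows $K_1$ to be chosen proportional to $C_F$ and $K_2$ large enough to absorb the lower-order terms. The only cosmetic difference is the regularization: the paper uses the comparison function $K_1\bigl[d+(d^2+|y_1-y_2|^\gamma)\bigl(\log(1+|y_1|^2)+\log(1+|y_2|^2)+K_2\bigr)\bigr]$ with a small parameter $d$ (depending on $\delta$) sent to $0$ at the end, rather than your $(|y_1-y_2|^2+\eta)^{\gamma/2}$, and phrases the second-order inequality via a coupled elliptic operator on $\R^m\times\R^m$ instead of the theorem of sums, but these are equivalent implementations of the same idea.
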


\begin{remark}
It is worth to observe that the uniform ellipticity~\eqref{eq:tauund} of~$\mathcal L$ is not needed in the proof of Theorem~\ref{holder}.
\end{remark}
We solve the cell problem in the following proposition. 

\begin{prop}\label{prop:nota}
Assume $(H1)$ and $(H2)$. For any $x \in \R^n$ fixed,  there exists  a unique ergodic constant $\lambda(x)$ such that the  cell problem~\eqref{eq:cellpb3} admits a unique corrector $w=w(y;x) \in C^2(\R^m)$. Moreover the ergodic constant is given by
\begin{equation}\label{frm_lambda}
\lambda(x)=\int_{\R^m} f(x,y)d\mu(y) \qquad\forall x\in \R^n,
\end{equation}
where $\mu$ is the unique invariant measure associated to $\mathcal{L}$.
\end{prop}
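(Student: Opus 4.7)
The plan is to follow the classical Lions--Papanicolaou--Varadhan vanishing-viscosity scheme, using Theorem~\ref{holder} as the essential compactness input. Fix $x\in\R^n$ and apply Theorem~\ref{holder} to $F(\cdot)=f(x,\cdot)$, which satisfies $(F1)$--$(F2)$ by~$(F)$ and $(H1)$. This yields, for each $\delta\in(0,1]$, a unique bounded viscosity solution $u_\delta$ to~\eqref{delta_ergo} with $\delta\|u_\delta\|_\infty\le \|f\|_\infty$ and the logarithmic Hölder estimate
\[
|u_\delta(y_1)-u_\delta(y_2)|\le K_1|y_1-y_2|^\gamma\bigl[\log(1+|y_1|^2)+\log(1+|y_2|^2)+K_2\bigr],
\]
with $K_1,K_2$ independent of $\delta$. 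Set $w_\delta(y):=u_\delta(y)-u_\delta(0)$; then $w_\delta(0)=0$ and $\{w_\delta\}_\delta$ is equibounded on every compact of $\R^m$ and equicontinuous there. At the same time $\{\delta u_\delta(0)\}_\delta$ is bounded in $\R$.

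Next I would extract, by Ascoli--Arzelà and a diagonal argument, a subsequence $\delta_k\to 0$ such that $w_{\delta_k}\to w$ locally uniformly on $\R^m$ and $\delta_k u_{\delta_k}(0)\to \lambda(x)$. Since $w_\delta$ solves $\delta w_\delta+\mathcal{L}(y,Dw_\delta,D^2 w_\delta)=f(x,y)-\delta u_\delta(0)$ and $\delta w_\delta\to 0$ locally uniformly (because $w_\delta$ is locally bounded uniformly in $\delta$), the stability of viscosity solutions yields
\[
\mathcal{L}(y,D_y w,D^2_{yy}w)+f(x,y)=\lambda(x)\quad\text{in }\R^m,\qquad w(0)=0.
\]
The uniform ellipticity of $\tau\tau^T$ in $(L)$ together with the local Hölder continuity of $f$ and standard interior Schauder estimates upgrades $w$ to $C^2(\R^m)$.

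For the formula~\eqref{frm_lambda}, test the equation $\delta u_\delta+\mathcal{L}u_\delta=f(x,\cdot)$ against the invariant measure $\mu$ of $\mathcal L$: since $\int_{\R^m}\mathcal{L}u_\delta\,d\mu=0$ (the defining property of $\mu$; the required integrability is secured by the log growth from Theorem~\ref{holder} and the polynomial decay of $\mu$ coming from the Lyapounov function $\chi(y)=|y|^2$ via~\eqref{lyapounov}), one obtains $\delta\int u_\delta\,d\mu=\int f(x,\cdot)\,d\mu$. On the other hand $\delta u_\delta=\delta u_\delta(0)+\delta w_\delta\to\lambda(x)$ locally uniformly, and, again by the log growth and the integrability of $\log(1+|y|^2)$ against $\mu$, one can pass to the limit in $\delta\int u_\delta\,d\mu$ by dominated convergence to conclude $\lambda(x)=\int_{\R^m}f(x,y)\,d\mu(y)$. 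This identification also shows that the limit $\lambda(x)$ is independent of the extracted subsequence, giving uniqueness of the ergodic constant; it also implies the full family $\delta u_\delta(0)$ converges.

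The part I expect to be most delicate is uniqueness of the corrector $w$ (the existence and uniqueness of $\lambda$ being handled by the measure-theoretic identity above). The standard strategy is: suppose $w_1,w_2$ are two solutions with $w_i(0)=0$ having at most logarithmic growth, and consider $v:=w_1-w_2$, which is a bounded-on-compacts subsolution/supersolution of the homogeneous equation $\mathcal L v=0$ associated to a uniformly elliptic operator with linear drift of Ornstein--Uhlenbeck type. Using the Lyapounov function $\chi$ from~\eqref{lyapounov} as a barrier (namely, comparing $v$ with $\pm\eta\chi$ on large balls and letting the radius tend to infinity, exploiting that $\mathcal L\chi\to+\infty$ dominates any logarithmic growth of $v$) one forces $v\equiv\text{const}$, and then $v(0)=0$ gives $w_1=w_2$. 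This Liouville-type step is the principal technical obstacle and is exactly where the compatibility condition $(H2)$ and the Lyapounov structure of $\mathcal L$ are used in an essential way.
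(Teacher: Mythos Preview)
Your proposal is correct and follows essentially the same route as the paper: the vanishing-discount scheme with compactness supplied by Theorem~\ref{holder}, stability of viscosity solutions to pass to the limit, Schauder regularity for $C^2$, the invariant-measure identity for~\eqref{frm_lambda}, and a Liouville-type argument for uniqueness of the corrector. The paper itself only sketches these steps and defers the uniqueness and formula parts to \cite{Ghilli18,MMT18}; your write-up makes them explicit, with the one minor caveat that the passage from viscosity to classical solution (needed before invoking Schauder) goes through Ishii's equivalence result~\cite{H}, and that~$(H2)$ is really consumed in Theorem~\ref{holder} rather than in the Liouville step.
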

\begin{proof}
This proof relies on the same arguments of~\cite[Proposition $4.2$ and Proposition $4.4$]{Ghilli18} (see also~\cite[Theorem 3.9]{MMT18} for similar arguments); indeed, in~\cite{Ghilli18} assumption~\cite[$(U)$]{Ghilli18} is not needed in the resolution of the ergodic problem. For the sake of completeness, we shall only give the main ingredients.\\
Fix $x\in\R^n$ and set $F(\cdot)=f(x,\cdot)$. Let $u_\delta$ be the solution to~\eqref{delta_ergo}; the function $v_\delta(\cdot):=u_\delta(\cdot)-u_\delta(0)$ is a solution to
$$
\delta v_\delta(y)+\mathcal{L}\left(y, D_y v_\delta, D^2_{yy}v_\delta\right)+\delta u_\delta(0)=F(y)\qquad\textrm{in }\R^m.
$$
Since $v_\delta(0)=0$, Theorem~\ref{holder}-(ii) yields that the family $\{v_\delta\}_\delta$ is uniformly locally bounded and uniformly locally H\"older continuous. By Ascoli-Arzela theorem (eventually passing to a subsequence that we still denote $v_\delta$), as $\delta\to0^+$, $v_\delta(\cdot)$ converges locally uniformly to some function~$w(\cdot;x)$ while $\delta v_\delta(\cdot)$ converges to zero  and $\delta u_\delta(0)$ converges to some constant~$\lambda=\lambda(x)$. By stability results, the couple~$(w(\cdot;x),\lambda(x))$ is a viscosity solution to~\eqref{eq:cellpb3}. By \cite[Theorem $1$ and Theorem $2$]{H}, $w$ is also a distributional solution to \eqref{eq:cellpb3}. Then the $C^2(\R^m)$-regularity of~$w(\cdot;x)$ stems from the uniform ellipticity~\eqref{eq:tauund} of~$\mathcal{L}$ and the H\"older regularity of~$f$ in~$(H1)$ (e.g. see \cite[Theorem $6.14$]{GT}).\\
The rest of the proof follows the same arguments of the aforementioned papers and we shall skip it.
\end{proof}
We can now deduce some properties of the function~$\lambda(x)$.
\begin{corollary}\label{corollary:A}
Besides the assumptions of Proposition~\ref{prop:nota}, assume also $(H4)$. Then, the function~$\lambda$ satisfies
$$
\|\lambda\|_\infty\leq C_1,\qquad |\lambda(x)-\lambda(\bar x)|\leq C_3|x-\bar x|^\beta \qquad \forall x,\bar x\in \R^n
$$
where $C_1$,~$C_3$ and~$\beta$ are the constants introduced respectively in~$(F)$ and in~$(H4)$.
\end{corollary}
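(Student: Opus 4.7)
The plan is to read both estimates off the integral representation
\[
\lambda(x)=\int_{\R^m} f(x,y)\,d\mu(y)
\]
provided by Proposition~\ref{prop:nota}, using the fact that the invariant measure $\mu$ is a probability measure. Since $\mu(\R^m)=1$, any pointwise bound on the integrand transfers directly to $\lambda$ without picking up an extra constant.

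First I would prove the $L^\infty$ bound. By assumption~$(F)$ we have $|f(x,y)|\leq C_1$ for every $(x,y)\in\R^n\times\R^m$, so
\[
|\lambda(x)|\leq \int_{\R^m} |f(x,y)|\,d\mu(y)\leq C_1\,\mu(\R^m)=C_1,
\]
which gives $\|\lambda\|_\infty\leq C_1$.

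Next I would derive the H\"older estimate. Fix $x,\bar x\in\R^n$ and set $F(y):=f(x,y)-f(\bar x,y)$. From the representation formula,
\[
\lambda(x)-\lambda(\bar x)=\int_{\R^m} F(y)\,d\mu(y),
\]
so that
\[
|\lambda(x)-\lambda(\bar x)|\leq \|F\|_\infty\,\mu(\R^m)=\|F\|_\infty.
\]
Invoking assumption~$(H4)$ on $F$ yields $\|F\|_\infty\leq C_3|x-\bar x|^\beta$, and the desired estimate follows.

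There is no real obstacle here: once the ergodic constant is identified with the $\mu$-average of~$f$, both bounds are just a consequence of $\mu$ being a probability measure. The only point worth noting is that $\mu$ is indeed normalized to have unit mass (it is the \emph{invariant probability measure} associated to $\mathcal{L}$, as used in the proof of Proposition~\ref{prop:nota} and formula~\eqref{lambda_intro}), so no extra multiplicative constant appears in either estimate.
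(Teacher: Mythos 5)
Your proof is correct and follows essentially the same route as the paper: the second estimate is obtained exactly as in the paper (apply $(H4)$ to $F(\cdot)=f(x,\cdot)-f(\bar x,\cdot)$ and integrate against the probability measure $\mu$ via formula~\eqref{frm_lambda}). The only cosmetic difference is that the paper deduces the $L^\infty$ bound from the estimate $\delta\|u_\delta\|_\infty\leq K_F$ of Theorem~\ref{holder}-(i), whereas you read it off the representation formula directly; both are immediate and equivalent given that $\mu$ is a probability measure.
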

\begin{proof}
The former inequality is an immediate consequence of Theorem~\ref{holder}-(i). In order to establish the latter estimate, it suffices to use assumptions~$(F)$ and~$(H4)$ and equation~\eqref{frm_lambda}.
\end{proof}
\begin{remark}
For more general situations, let us provide an alternative proof of Corollary~\ref{corollary:A} avoiding formula~\eqref{frm_lambda}. Let $u_\delta$ and~$\bar u_\delta$ solve~\eqref{delta_ergo} with~$F(\cdot)$ replaced with~$f(x,\cdot)$ and respectively with~$f(\bar x, \cdot)$. The functions $\overline u_\delta\pm C_3|x-\bar x|^\beta/\delta$ are respectively a super- and a subsolution to~\eqref{delta_ergo}. The comparison principle entails: $\delta\|u_\delta-\bar u_\delta\|_\infty\leq C_3|x-\bar x|^\beta$. Letting $\delta\to0$, we get the statement.
\end{remark}

Mainly by relying on the results of Theorem \ref{holder}, we prove the following estimates on the solution of the cell problem \eqref{eq:cellpb3} that will be crucial in the proof of the rate of convergence, stated in~Theorem \ref{thm:rategc}.

\begin{lemma}\label{estchiholder}
Assume $(H1)$ and $(H2)$. Let $w$ and~$\chi$ be the solution to the cell problem \eqref{eq:cellpb3} and respectively the Lyapounov function of \eqref{lyapounov}. Then, there exists a constant~$C_5>0$ such that
\begin{itemize}
\item[i)] $|w(y;x)|\leq C_5\left(1+\log(1+|y|^2)\right),\qquad\forall(x,y)\in\R^n\times\R^m$;
\item[ii)] $|w(y;x)|-\eta \chi(y)\leq  C_5\log C_5- C_5\log\eta +\eta,\qquad\forall(x,y)\in\R^n\times\R^m$.
\end{itemize}
\end{lemma}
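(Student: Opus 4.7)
I would prove~(i) by working at the level of the approximating problem~\eqref{delta_ergo}, building an explicit logarithmic supersolution for $v_\delta(\cdot):=u_\delta(\cdot)-u_\delta(0)$ uniformly in $\delta$, and then passing to the limit $\delta\to 0$ (recall that $v_\delta\to w(\cdot;x)$ locally uniformly by Proposition~\ref{prop:nota}). Part~(ii) will then follow from~(i) by elementary one-variable optimization.

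The function $v_\delta$ satisfies
\[
\delta v_\delta+\mathcal{L}\bigl(y,D_yv_\delta,D^2_{yy}v_\delta\bigr)=f(x,y)-\delta u_\delta(0),
\]
whose right-hand side is bounded by $2C_1$ thanks to~$(F)$ and Theorem~\ref{holder}-(i). The natural test function is $\psi(y):=A\log(1+|y|^2)+B$: a direct computation gives
\[
\mathcal{L}\psi=\frac{2A\alpha|y|^2}{1+|y|^2}-\frac{2A\,\Tr(\tau\tau^T)}{1+|y|^2}+\frac{4A\,y^T\tau\tau^T y}{(1+|y|^2)^2}+\frac{2A\,b(y)\cdot y}{1+|y|^2},
\]
so that $\mathcal{L}\psi\to 2A\alpha$ at infinity, the other terms being uniformly bounded and vanishing as $|y|\to\infty$. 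Choosing $A$ with $A\alpha\geq 3C_1$, we find $R>0$ (depending only on $\alpha,\|\tau\|_\infty,\|b\|_\infty,C_1$) such that $\mathcal{L}\psi\geq 2C_1$ for $|y|\geq R$. Theorem~\ref{holder}-(ii) applied with $y_2=0$ provides $|v_\delta(y)|\leq K_1R^\gamma[\log(1+R^2)+K_2]=:M_R$ for every $|y|\leq R$, uniformly in $\delta$ and $x$; setting $B:=M_R$ ensures $\psi\geq |v_\delta|$ on $\overline{B_R}$.

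To run the comparison argument on $\R^m\setminus B_R$, where $v_\delta-\psi$ need not tend to $0$, I would introduce the quadratic penalization $\tilde\psi_\eps(y):=\psi(y)+\eps(|y|^2+1)$. Since $\mathcal{L}(|y|^2)=2\alpha|y|^2-2\Tr(\tau\tau^T)+2b(y)\cdot y$, up to enlarging $R$ one still has $\mathcal{L}\tilde\psi_\eps\geq 2C_1$ for $|y|\geq R$; on the other hand, the quadratic growth of $\tilde\psi_\eps$ combined with the at most $|y|^\gamma\log(1+|y|^2)$ growth of $v_\delta$ (again from Theorem~\ref{holder}-(ii)) forces the maximum of $v_\delta-\tilde\psi_\eps$ to be attained at some finite $y_\eps\in\R^m$. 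If $y_\eps\in\overline{B_R}$, the choice of $B$ immediately gives $(v_\delta-\tilde\psi_\eps)(y_\eps)\leq 0$; otherwise, from $D^2v_\delta(y_\eps)\leq D^2\tilde\psi_\eps(y_\eps)$ and $\tau\tau^T\geq 0$ we deduce $\mathcal{L}(v_\delta)(y_\eps)\geq\mathcal{L}(\tilde\psi_\eps)(y_\eps)\geq 2C_1$, which plugged into the PDE for $v_\delta$ (whose right-hand side is $\leq 2C_1$) gives $\delta(v_\delta-\tilde\psi_\eps)(y_\eps)\leq 0$, hence once again $(v_\delta-\tilde\psi_\eps)(y_\eps)\leq 0$. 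Repeating the reasoning with $-v_\delta$ in place of $v_\delta$ and letting first $\eps\to 0$ and then $\delta\to 0$ yields $|w(y;x)|\leq A\log(1+|y|^2)+B\leq C_5(1+\log(1+|y|^2))$ with $C_5:=\max(A,B)$ independent of~$x$.

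For~(ii), take the Lyapounov function $\chi(y)=|y|^2$: by~(i) we have $|w(y;x)|-\eta\chi(y)\leq C_5+C_5\log(1+|y|^2)-\eta|y|^2$, and the scalar maximum of $s\mapsto C_5\log(1+s)-\eta s$ on $[0,\infty)$ is attained at $s^\ast=C_5/\eta-1$ (for $\eta<C_5$; the remaining case is direct), with value $C_5\log(C_5/\eta)-C_5+\eta$, so adding the leading $C_5$ gives exactly the announced bound $C_5\log C_5-C_5\log\eta+\eta$. The main obstacle I anticipate is the comparison step on the unbounded exterior $\R^m\setminus B_R$: since the Hölder estimate of Theorem~\ref{holder} only provides subpolynomial growth of $v_\delta$, the standard maximum principle does not apply directly and one has to compactify the maximization; the penalization $\eps(|y|^2+1)$ is precisely what does this while preserving the supersolution property outside a ball, using that $\mathcal{L}(|y|^2)$ grows like $2\alpha|y|^2$ at infinity.
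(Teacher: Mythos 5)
Your proof is correct and follows the same strategy as the paper: both construct a logarithmic supersolution of the form $C_5\bigl(1+\log(1+|y|^2)\bigr)$ for the $\delta$-approximating problem outside a ball, combine it with the uniform-in-$\delta$ local bound from Theorem~\ref{holder}~ii) inside the ball, run the comparison, and pass to the limit $\delta\to0$; part~(ii) is the same one-variable optimization. The paper simply refers to the argument of~\cite[Proposition~3.3]{Ghilli18} for the comparison step on the unbounded exterior, which you make explicit through the quadratic penalization $\eps(|y|^2+1)$.
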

\begin{proof}
(i). We note that the function $g(y)= C_5(1+\log(1+|y|^2))$ is a supersolution to~\eqref{delta_ergo} at least for $|y|$ sufficiently large (independently of $\delta$) provided that $C_5$ is sufficiently large (independently of $\delta$). The rest of the proof follows the same arguments of~\cite[Proposition $3.3$]{Ghilli18} replacing~\cite[Lemma $3.5$]{Ghilli18} with Theorem~\ref{holder} (see also \cite[Lemma 3.8]{MMT18} for similar arguments).

(ii). By point (i) and since $\chi(y)= |y|^2$, we immediately get that 
\begin{equation*}
|w(y;x)|-\eta \chi(y)\leq C_5\left(1+\log(y^2+1)\right)-\eta |y|^2.
\end{equation*}
Since the maximum of the right-hand side is attained for $y^2=\frac{C_5-\eta}{\eta}$, we get the statement.
\end{proof}

As a consequence of Theorem \ref{holder}, we are able to prove the following  results, that we need in the proof of the rate of convergence stated in Theorem \ref{thm:rategc}. Note that point v) establishes  a continuous dependence estimate  of the ergodic corrector.
\begin{prop}\label{prop:stimaccia}
Assume $(H1)$, $(H2)$, $(H3)$ and $(H4)$. For any $x$ and $\bar x$ fixed in $\R^n$, set $F(\cdot):=f(x,\cdot)-f(\bar x,\cdot)$. The problem
\begin{equation}\label{stimaccia}
\delta 	U_\delta(y) +\mathcal{L}\left(y, D_y U_\delta, D^2_{yy} U_\delta\right)+F(y)=0, \qquad \mbox{ in }\R^m
\end{equation}
admits exactly one bounded solution. Moreover, the following properties hold true:
\begin{itemize}
\item[i)] as $\delta\to0^+$, $\left\{\delta U_\delta (0)\right\}_{\delta}$ converges to $\lambda(x)-\lambda(\bar x)$;
\item[ii)] For $W_\delta(\cdot):=U_\delta(\cdot)-U_\delta (0)$, as $\delta\to0^+$, the sequence $\{W_\delta\}_{\delta}$ converges locally uniformly to $w(\cdot; x)-w(\cdot; \bar x)$, where $w(\cdot;x)$ and $w(\cdot;\bar x)$ are respectively the solution to problem~\eqref{eq:cellpb3} and the solution to the same problem with $x$ replaced by $\bar x$;
\item[iii)] for some constant $C_4$ independent of $x$, $\bar x$ and $\delta$, there holds 
\[
\left|U_\delta(y_1)-U_\delta(y_2)\right|\leq |y_1-y_2|^\gamma C_4|x-\bar x|^\beta\left[\log\left(1+|y_1|^2\right)+\log\left(1+|y_2|^2\right)+C_4\right]\qquad\forall y_1,y_2\in\R^m;
\]
\item[iv)] for some constant $C_4$ independent of $x$ and $\bar x$, the function $W(\cdot):=w(\cdot;x)-w(\cdot;\bar x)$ fulfills
\[
\left|W(y_1)-W(y_2)\right|\leq |y_1-y_2|^\gamma C_4|x-\bar x|^\beta\left[\log\left(1+|y_1|^2\right)+\log\left(1+|y_2|^2\right)+C_4\right]\qquad\forall y_1,y_2\in\R^m;
\]
\item[v)] the function $W(\cdot)$ introduced in point (iv) also fulfills
\[
\left|W(y)\right|\leq C_6|x-\bar x|^\beta[1+\log\left(1+|y|^2\right)]\qquad \forall y \in \R^m,
\]
for some constant $C_6$ independent of $x$ and $\bar x$.
\end{itemize}
\end{prop}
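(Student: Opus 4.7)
The plan is to reduce everything to Theorem~\ref{holder} applied to the source $F(\cdot)=f(x,\cdot)-f(\bar x,\cdot)$, which by $(H3)$ and $(H4)$ satisfies $(F1)$ with $K_F=C_3|x-\bar x|^\beta$ and $(F2)$ with $C_F=C_3|x-\bar x|^\beta$. Theorem~\ref{holder}(i) then immediately yields existence and uniqueness of $U_\delta$ together with $\delta\|U_\delta\|_\infty\leq C_3|x-\bar x|^\beta$. Since, by Theorem~\ref{holder}(ii), the H\"older constant $K_1$ depends linearly on $C_F$, the estimate stated there gives~(iii) with a constant $C_4$ independent of $x$, $\bar x$ and $\delta$.

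For~(i) and~(ii), I would replicate the limit argument in the proof of Proposition~\ref{prop:nota}. Set $W_\delta:=U_\delta-U_\delta(0)$; then $W_\delta(0)=0$, $\{\delta U_\delta(0)\}_\delta$ is bounded by $C_3|x-\bar x|^\beta$, and by~(iii) the family $\{W_\delta\}_\delta$ is locally equi-H\"older continuous. Ascoli-Arzel\`a yields subsequential local uniform convergence $W_\delta\to W$ together with $\delta U_\delta(0)\to\Lambda$, and by stability of viscosity solutions the pair $(W,\Lambda)$ solves the ergodic cell problem with source~$F$ and normalization $W(0)=0$. By linearity of~$\mathcal{L}$, the function $w(\cdot;x)-w(\cdot;\bar x)$ solves the same equation with constant $\lambda(x)-\lambda(\bar x)$; uniqueness of the ergodic pair (cf.~Proposition~\ref{prop:nota}) forces $\Lambda=\lambda(x)-\lambda(\bar x)$ and $W=w(\cdot;x)-w(\cdot;\bar x)$. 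Since every subsequential limit is the same, convergence holds along the full family, yielding~(i) and~(ii). Property~(iv) follows by passing to the local uniform limit in~(iii).

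The main step is~(v). I would build a logarithmic supersolution carrying the prefactor $|x-\bar x|^\beta$, namely
\[
G(y):=C_6|x-\bar x|^\beta\bigl(1+\log(1+|y|^2)\bigr).
\]
A direct computation gives $\mathcal{L}\bigl[\log(1+|y|^2)\bigr]\to 2\alpha$ as $|y|\to\infty$, so one can fix $R>0$, depending only on the data of~$\mathcal{L}$, such that $\mathcal{L}\bigl[\log(1+|y|^2)\bigr]\geq \alpha$ on $\{|y|\geq R\}$. Choosing $C_6$ large enough that $\alpha C_6\geq 2C_3$ and $C_6\geq C_4 R^\gamma(\log(1+R^2)+C_4)$, and using $|\delta U_\delta(0)+F|\leq 2C_3|x-\bar x|^\beta$, one checks on $\{|y|\geq R\}$ that
\[
\delta G+\mathcal{L}[G]\geq \alpha C_6|x-\bar x|^\beta\geq 2C_3|x-\bar x|^\beta\geq |\delta U_\delta(0)+F(y)|,
\]
so that $G$ (resp.~$-G$) is a classical super- (resp.~sub-)solution to the equation satisfied by $W_\delta$ there; the second lower bound on~$C_6$, applied together with~(iii) at $(y,0)$, also guarantees $|W_\delta|\leq G$ on $\overline{B_R}$.

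For each fixed $\delta>0$ the function $W_\delta$ is bounded while $G$ grows logarithmically, so the maxima of $\pm W_\delta-G$ on $\{|y|\geq R\}$ are attained at finite points. At any such interior maximum, the super-solution inequality combined with $D^2(\pm W_\delta-G)\leq 0$ yields $\delta(G\mp W_\delta)\geq 0$; together with the boundary control on $\partial B_R$ this gives $|W_\delta|\leq G$ on the whole exterior, hence on $\R^m$. Passing to the limit $\delta\to 0^+$ via~(ii) then delivers~(v). The main obstacle is precisely why one must work with the bounded approximants $W_\delta$ and not directly with the limit $W$: Lemma~\ref{estchiholder}(i) bounds $|W|$ by a logarithm without the prefactor $|x-\bar x|^\beta$, while~(iv) only gives growth $|y|^\gamma\log(1+|y|^2)$, so a direct comparison at the level of the limit cell equation loses control at infinity; the $\delta$-approximation restores it via the $L^\infty$ bound on $W_\delta$.
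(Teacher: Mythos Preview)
Your proof is correct and follows essentially the same strategy as the paper. The one difference worth noting concerns parts~(i)--(ii): the paper exploits the linearity of~$\mathcal{L}$ to write $U_\delta(\cdot)=u_\delta(\cdot;x)-u_\delta(\cdot;\bar x)$, where each $u_\delta(\cdot;z)$ solves~\eqref{delta_ergo} with source $f(z,\cdot)$, so that (i) and~(ii) follow at once from the convergence already established in Proposition~\ref{prop:nota} for each term separately; you instead rerun the ergodic-limit argument directly for the source~$F$ and then invoke uniqueness of the ergodic pair to identify the limit with $w(\cdot;x)-w(\cdot;\bar x)$. Both routes are valid, the paper's being a shade more economical (and avoiding the need to quote uniqueness for the cell problem with source~$F$, which strictly speaking is not the statement of Proposition~\ref{prop:nota}, though its proof applies verbatim). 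For (iii)--(v) your arguments coincide with the paper's: Theorem~\ref{holder}(ii) with $C_F=C_3|x-\bar x|^\beta$ for~(iii), passage to the limit for~(iv), and the logarithmic supersolution $G$ compared against the bounded approximants~$W_\delta$ for~(v); your treatment of~(v) spells out the maximum-principle step where the paper simply refers to the arguments of Lemma~\ref{estchiholder}(i) and \cite[Proposition~3.3]{Ghilli18}.
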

\begin{proof}
We observe that the linearity of equation~\eqref{stimaccia} entails that $U_\delta(\cdot)=u_\delta(\cdot;x)-u_\delta(\cdot;\bar x)$ where $u_\delta(\cdot;z)$ solves
$\delta u_\delta +\mathcal{L}\left(y, D_y u_\delta, D_{yy}^2 u_\delta\right)+f(y;z)=0$. By standard theory on ergodic problem, we deduce points i) and ii).\\
Point~iii) is an easy consequence of Theorem~\ref{holder} ii) with $K_F=C_F=C_3|x-\bar x|^\beta$.\\
Point~iv) follows from points~ii) and~iii).\\
Point~v) is obtained following the same arguments as those in the proof of Lemma~\ref{estchiholder} i), replacing the result in Theorem~\ref{holder} ii) with point~iii). Nevertheless, let us give some details in order to provide explicitly the dependence on $|x-\bar x|$. We observe that, by assumption~$(H4)$, we have: $\|F\|_\infty\leq C_3|x-\bar x|^\beta$. Hence, there exist two constants $C$ and $R$ (independent of $x$ and $\bar x$) such that the function $g(y):= C C_3|x-\bar x|^\beta\left[1+\log\left(1+|y|^2\right)\right]$ is a supersolution to problem~\eqref{delta_ergo} in $\R^m\setminus B(0,R)$. 
We observe that, by point~ iii), there holds
\[
\max_{\overline {B(0,R)}}U_\delta-U_\delta(0)\leq C_4|x-\bar x|^\beta  R^\gamma\left[2\log\left(1+R^2\right)+C_4\right]
\]
and we conclude arguing as in~\cite[Proposition $3.3$]{Ghilli18} (see also \cite[Lemma 3.8]{MMT18} for a similar argument).
\end{proof}

\subsection{Effective problem}\label{subsec:effect}
By virtue of the properties of~$\lambda(x)$ found in subsection~\ref{subsec:cellp1}, we can now tackle the effective problem~\eqref{eq:effprob}.
\begin{lemma}\label{lemma:ex_u}
Assume $(H1)$, $(H2)$ and $(H4)$.
\begin{itemize}
\item[i)] the comparison principle holds for problem~\eqref{eq:effprob};
\item [ii)] there exists a unique bounded solution~$u$ to problem~\eqref{eq:effprob}. Moreover, there holds: $\|u\|_\infty\leq C+C_1$ where~$C$ and~$C_1$ are the constants introduced respectively in~$(H)$ and in~$(F)$.
\end{itemize}
\end{lemma}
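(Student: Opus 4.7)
The plan is to treat the two items in sequence, observing that the effective equation~\eqref{eq:effprob} is a stationary fully nonlinear degenerate elliptic equation with a bounded, H\"older continuous source term $\lambda(x)$ (which is precisely what Corollary~\ref{corollary:A} guarantees under the hypotheses of the Lemma). Consequently, both statements should follow from classical viscosity solution theory, and the proof will be quite short.

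For item~(i), I would invoke the standard comparison principle for fully nonlinear degenerate elliptic equations of the type treated in \cite[Section~3]{CIL92}. The structural assumption in~$(H)$, namely
\[
\bigl|H(x,p,X)-H(y,q,Y)\bigr|\leq C\bigl(|p-q|+|X-Y|\bigr)+C|x-y|\bigl(1+|p|+|X|\bigr),
\]
together with monotonicity of $H$ in $X$ and the presence of the coercive zero-order term~$u$, is exactly the condition that lets one control, via the doubling-of-variables technique, the difference of Hamiltonians evaluated at the matrices produced by the Theorem of Sums; the bounded, (H\"older) continuous forcing term $\lambda(x)$ poses no additional difficulty. I would thus refer directly to the comparison results in~\cite{CIL92,BCD}, noting that the comparison principle holds already in the class of bounded continuous sub/supersolutions (which is what is needed in~(ii)).

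For item~(ii), I would apply Perron's method along the exact same lines as in the proof of Lemma~\ref{lemma:Exis_udelta}. By Corollary~\ref{corollary:A}, $\|\lambda\|_\infty\leq C_1$, and by~$(H)$ we have $|H(x,0,0)|\leq C$; therefore the constant functions $u^\pm:=\pm(C+C_1)$ are respectively a supersolution and a subsolution of~\eqref{eq:effprob}, since substituting $u=u^\pm$ (with $Du=0$, $D^2u=0$) gives $u^\pm + H(x,0,0)+\lambda(x)$ of the correct sign. Perron's method (in the viscosity sense, see~\cite{CIL92}) then produces a viscosity solution $u$ with $u^-\leq u\leq u^+$, which in particular yields $\|u\|_\infty\leq C+C_1$. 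Uniqueness among bounded continuous solutions is then immediate from the comparison principle established in~(i).

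The only mildly delicate point is to ensure that the forcing term~$\lambda$ is regular enough for the comparison argument to close: the H\"older continuity of $\lambda$ provided by Corollary~\ref{corollary:A} is more than sufficient, since the structural estimate in~$(H)$ is compatible with merely continuous source terms. All other steps are standard applications of the references cited above.
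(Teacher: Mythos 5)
Your treatment of item~(ii) matches the paper: both use the constant barriers $u^\pm=\pm(C+C_1)$ together with Perron's method and then uniqueness from the comparison principle established in item~(i). (You silently read $(H)$ as giving $|H(x,0,0)|\leq C$ rather than the one-sided bound $H(x,0,0)\leq C$ that is literally written; the same reading is implicitly used in the paper's proof of Lemma~\ref{lemma:Exis_udelta}, so this is fine, but it is worth noting that only the two-sided bound makes the barrier argument close.)

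For item~(i), however, your route genuinely departs from the paper's, and it has a gap. The paper does not attempt to prove comparison for the effective problem from the structure condition in~$(H)$: it simply invokes the standing assumption~$(C)$ (comparison for~\eqref{eq:HJ-eps3}), which is assumed throughout Section~\ref{sec:pre}, together with the boundedness and H\"older continuity of~$\lambda$ from Corollary~\ref{corollary:A}. You instead claim that the structure condition in~$(H)$ is ``exactly'' what is needed to run the doubling-of-variables argument of~\cite[Section~3]{CIL92}. This is not so. After the Theorem of Sums with $\phi=\tfrac{\alpha}{2}|x-y|^2$ one only has $\|X\|\lesssim\alpha$, so the term $C|x-y|\,|X|$ appearing in~$(H)$ produces a contribution of order $\alpha|x-y|$, which is \emph{not} controlled by the required modulus $\omega(\alpha|x-y|^2+|x-y|)$: $\alpha|x-y|^2\to0$ does not force $\alpha|x-y|\to0$. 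The reason comparison holds for Bellman-type Hamiltonians as in~\eqref{H_control} is not the bound~$(H)$ itself but the finer estimate $\operatorname{tr}(\sigma\sigma^T(x)X)-\operatorname{tr}(\sigma\sigma^T(y)Y)\leq 3\alpha\,L_\sigma^2|x-y|^2$, obtained by testing the full matrix inequality of the Theorem of Sums against vectors of the form $(\sigma(x)\xi,\sigma(y)\xi)$; the blanket assumption~$(C)$ is there precisely to avoid having to repeat this kind of argument for general $H$. To close your proof of item~(i), replace the appeal to $(H)$ by an appeal to~$(C)$ (noting that the effective Hamiltonian $H(x,p,X)+\lambda(x)$ inherits the comparison principle once $\lambda$ is known to be bounded and uniformly continuous, which Corollary~\ref{corollary:A} gives).
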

\begin{proof}
Assumption~$(C)$ and Corollary~\ref{corollary:A} entail that the comparison principle applies to the effective problem~\eqref{eq:effprob}. Following the same arguments as those in the proof of Lemma~\ref{lemma:Exis_udelta}, we achieve the second part of the statement.
\end{proof}

\subsection{Rate of convergence}\label{subsec:rateconv}
In the following theorem we prove our main result.
\begin{theorem}\label{thm:rategc}
Assume $(H1)$, $(H2)$, $(H3)$, $(H4)$. Let $u^\eps$ and~$u$ be respectively the solution to problem~\eqref{eq:HJ-eps3} and to the effective problem \eqref{eq:effprob}. For $\eps$ sufficiently small and for every compact $\mathcal K \subset \R^m$ there exists a constant $K\geq 0$ such that 
$$
\left|u^\eps(x,y)-u(x)\right|\le K
\left(\eps|\log\eps|\right)^{\frac{\beta}{2}}
\qquad \forall (x,y)\in \R^n\times \mathcal{K}
$$
where $\beta$ is the constant introduced in assumptions~$(H3)-(H4)$.
\end{theorem}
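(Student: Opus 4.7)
The plan is to extend the perturbed test function / doubling-of-variables argument used for periodic homogenization in \cite{CM09,CM11,CCM11} to the present unbounded setting, exploiting the logarithmic growth of the corrector $w(\cdot;x)$ from Lemma \ref{estchiholder} and its H\"older dependence on $x$ from Proposition \ref{prop:stimaccia}(v). I will prove only the one-sided bound $u^\eps(x,y)-u(x)\le K(\eps|\log\eps|)^{\beta/2}$ on $\R^n\times\mathcal{K}$; the reverse inequality follows from a symmetric argument obtained by flipping the sign of $w$ and interchanging the roles of sub- and supersolutions.

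The core object is the auxiliary functional
\[
\Phi(x,\bar x,y):=u^\eps(x,y)-u(\bar x)-\eps\,w(y;\bar x)-\frac{|x-\bar x|^2}{2\sigma}-\eta|y|^2,
\]
with two small parameters $\sigma,\eta>0$ to be tuned as suitable small powers of $\eps$. By the uniform bounds on $u^\eps,u$ (Lemmas \ref{lemma:Exis_udelta}, \ref{lemma:ex_u}), the logarithmic growth of $\eps w$ in $y$ (Lemma \ref{estchiholder}(i)), and the Lyapounov penalty $\eta|y|^2$, the supremum of $\Phi$ is finite and attained at some point $(x_0,\bar x_0,y_0)$. A standard comparison $\Phi(x_0,\bar x_0,y_0)\ge\Phi(0,0,0)$ combined with Lemma \ref{estchiholder}(ii) applied with parameter $\eta/\eps$ yields the a priori bounds $|x_0-\bar x_0|^2\le C\sigma$ and $\log(1+|y_0|^2)\le C|\log\eta|$, the latter being the essential substitute for the $L^\infty$-localization available in the periodic setting.

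Because $w(y;\cdot)$ is only H\"older continuous in the slow variable, $\Phi$ is not smooth enough to test the supersolution property of $u$, so I mollify $w$ in $\bar x$ at scale $\rho>0$. By Proposition \ref{prop:stimaccia}(v) and standard convolution estimates, the resulting $w_\rho$ satisfies $|w_\rho-w|\le C\rho^\beta(1+\log(1+|y|^2))$, $|D_{\bar x}w_\rho|\le C\rho^{\beta-1}(1+\log(1+|y|^2))$, and $|D^2_{\bar x\bar x}w_\rho|\le C\rho^{\beta-2}(1+\log(1+|y|^2))$. Since $\mathcal{L}$ is linear in $(q,Y)$ and acts only on $y$, convolving the cell problem \eqref{eq:cellpb3} in $\bar x$ and using the $\beta$-H\"older continuity of $\lambda$ (Corollary \ref{corollary:A}) and of $f$ in $x$ (assumption $(H4)$) gives
\[
\mathcal{L}\bigl(y,D_yw_\rho(y;\bar x),D^2_{yy}w_\rho(y;\bar x)\bigr)+f(\bar x,y)=\lambda(\bar x)+O(\rho^\beta).
\]
Replacing $w$ with $w_\rho$ in $\Phi$ produces a functional $\Phi_\rho$ that is smooth in $\bar x$, $C^2$ in $y$, and whose supremum differs from that of $\Phi$ by at most $C\eps\rho^\beta|\log\eta|$.

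At the maximum of $\Phi_\rho$, Ishii's lemma provides second order jets for $u^\eps$ at $(x_0,y_0)$ and for $u$ at $\bar x_0$. Subtracting the viscosity sub/supersolution inequalities and eliminating the $\eps^{-1}\mathcal{L}$-contribution via the mollified cell identity above, the singular terms cancel up to four error sources: (a) the $O(\rho^\beta)$ mollifier error; (b) an $\eta/\eps$-contribution coming from $\eta|y|^2$ acted on by the drift $\alpha y$ in $\mathcal{L}$, made small by the positivity of $\alpha$ combined with $|y_0|^2\le C/\eta$; (c) the source discrepancy $|f(x_0,y_0)-f(\bar x_0,y_0)|\le C_3|x_0-\bar x_0|^\beta$ from $(H4)$; and (d) the $H$-modulus of continuity from $(H)$, bounded by $C\bigl(|p_1-p_2|+|X_1-X_2|+|x_0-\bar x_0|(1+|p_1|+|X_1|)\bigr)$, in which the test function gradients and Hessians carry explicit $\sigma$- and $\rho$-dependence. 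Specializing $\bar x=x$ in $\Phi$ and using Lemma \ref{estchiholder}(i) on $\mathcal{K}$ converts the bound on $\sup\Phi$ into the required estimate on $u^\eps(x,y)-u(x)$; an optimal choice of $\rho,\sigma,\eta$ as suitable small powers of $\eps$ (with $\sigma,\rho^2\sim\eps|\log\eps|$ and $\eta\ll\eps^2$) balances all contributions at the level $(\eps|\log\eps|)^{\beta/2}$. The main obstacle will be precisely this three-parameter bookkeeping: the $|\log\eta|$-factors arising from the logarithmic growth of $w$ in $y$ are exactly what turns the naive rate $\eps^{\beta/2}$ into $(\eps|\log\eps|)^{\beta/2}$.
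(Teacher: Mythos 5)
Your proposal correctly identifies the main ingredients (Lyapounov-type penalization, logarithmic growth of $w$, continuous dependence of $w$ on $x$, Ishii's lemma), but it is missing the single most important device of the paper's proof: the \emph{shaking-of-coefficients} Hamiltonian
\[
\overline H_\rho(x,p,X)=\min_{\xi\in B_\rho(x)}\bigl[H(\xi,p,X)+\lambda(\xi)\bigr],
\]
introduced in \eqref{eq:effbarrhoH} and used via the approximated effective problem \eqref{eq:effprobrho}. Without it, the step of your argument that invokes the $H$-modulus of continuity fails. Concretely: at the maximum of $\Phi$ the only a priori information on the doubling variables is $|x_0-\bar x_0|^2\le C\sigma$ (neither $u^\eps$ nor $u$ is known to have any H\"older/Lipschitz modulus, only boundedness), while the slot $p_1$ carries the penalization gradient $(x_0-\bar x_0)/\sigma$, so $|p_1|\sim\sigma^{-1/2}$, and Ishii's matrix inequality only bounds $X_1$ by $O(\sigma^{-1})$. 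Thus the term $C|x_0-\bar x_0|(1+|p_1|+|X_1|)$ from assumption $(H)$ is at best $O(1)$ and in general $O(\sigma^{-1/2})$, which does not vanish as $\sigma\to 0$, let alone produce a rate. No choice of $\sigma,\rho,\eta$ cures this: it is the classical obstruction to quantitative estimates for second-order comparison, and the $\overline H_\rho$-trick (with the doubling parameter $\tau\to\infty$ forcing $|x_\tau-\xi_\tau|<\rho$) removes the $x$-dependence of $H$ \emph{exactly}, replacing the modulus-of-continuity bound by the inequality $\overline H_\rho(\xi_\tau,p,X)\le H(x_\tau,p,X)+\lambda(x_\tau)$.

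A second, independent divergence from the paper: you mollify $w(y;\cdot)$ in the slow variable to gain smoothness. The paper does something simpler and sharper. After finding the global maximum point $(\bar x,\bar y)$ of $\psi$, it passes to $\tilde\psi$ in which $w$ is \emph{frozen} at $\bar x$, so the test function contains $\eps w(y;\bar x)$, which is a constant with respect to $x$. No $x$-derivative of $w$ ever enters the computation; the continuous-dependence estimate Proposition \ref{prop:stimaccia}(v) is used only once, to guarantee (via the localization term $c|x-\bar x|^2$ and condition \eqref{eqn:cr_holder}) that the frozen functional $\tilde\psi$ still has a local maximum inside the compact set $\mathcal B$. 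Your mollification generates extra error terms of size $\eps\rho^{\beta-2}|\log\eta|$ in the Hessian of the test function; with your proposed scaling $\rho\sim(\eps|\log\eps|)^{1/2}$ these happen to be of the target order, but they are entirely avoidable, and they make the bookkeeping strictly harder. Finally, your $\Phi$ has no localization in $|x|$ (the paper uses $\sigma|x|^2$ together with Lemma \ref{lem:est}), so in the unbounded $x$-setting the supremum of $\Phi$ need not be attained; this is a smaller but also genuine gap.
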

In order to prove Theorem \ref{thm:rategc}, for~$\rho\in(0,1)$, it is expedient to introduce the operators
\begin{equation}\label{eq:effbarrhoH}
\overline H_\rho(x,p,X)=\min_{\xi \in B_\rho(x)}\left[H(\xi, p, X)+\lambda(\xi)\right].
\end{equation}
The use of the operators $\overline H_\rho$ is in the spirit of the approximated Hamiltonians introduced in \cite{BJ02} and in the \textit{shaking of coefficients} method by Krylov (see \cite{Kr00}) and it was already applied in~\cite{CM11} in order to overcome the lack of uniform continuity of the Hamiltonian.
We consider the {\it approximated} effective problem
\begin{equation}\label{eq:effprobrho}
u_\rho(x)+\overline H_\rho\left(x, D_x u_\rho, D^2_{xx}u_\rho\right)=0;
\end{equation}
in the following Lemma, we gather some useful properties of this problem.
\begin{lemma}\label{lem:approxrho}
Problem~\eqref{eq:effprobrho} admits exactly one bounded solution~$u_\rho$. Moreover, the sequence $\left\{u_\rho \right\}_\rho$ converges locally uniformly to the solution $u$ to \eqref{eq:effprob} as $\rho \to 0$.
\end{lemma}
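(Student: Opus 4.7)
The plan is to treat \eqref{eq:effprobrho} as a fully nonlinear elliptic PDE in its own right, solve it by Perron's method, and then pass to the limit $\rho\to 0$ via the half-relaxed limits method. The cornerstone is that $\overline H_\rho$ inherits the structural features of $H$ plus the regularity of $\lambda$ supplied by Corollary~\ref{corollary:A}: as an infimum over the compact set $\overline{B_\rho(x)}$ of continuous functionals it is attained; it is degenerate elliptic and $C$-Lipschitz in $(p,X)$ exactly as $H$; it satisfies $|\overline H_\rho(x,0,0)|\le C+C_1$; and it has a modulus of continuity in $x$ of the form $C|x-y|(1+|p|+|X|)+C_3|x-y|^\beta$, obtained by translating a minimizer in $\overline{B_\rho(y)}$ into $\overline{B_\rho(x)}$ and combining the mixed Lipschitz/H\"older estimates from $(H)$ and Corollary~\ref{corollary:A}. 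These properties yield the comparison principle for \eqref{eq:effprobrho} by exactly the argument that produced Lemma~\ref{lemma:ex_u}-(i) from $(C)$, and Perron's method with the constant sub/supersolutions $\pm(C+C_1)$ then provides the unique bounded solution $u_\rho$ together with the uniform estimate $\|u_\rho\|_\infty\le C+C_1$, independent of $\rho$.

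For the convergence of $u_\rho$ to $u$, I would combine two one-sided estimates built around the elementary inequality $\overline H_\rho(x,p,X)\le H(x,p,X)+\lambda(x)$, obtained by taking $\xi=x$ in the infimum. First, this inequality makes every solution of \eqref{eq:effprobrho} an automatic bounded viscosity supersolution of \eqref{eq:effprob}, so Lemma~\ref{lemma:ex_u}-(i) forces $u_\rho\ge u$ pointwise, and hence $\liminf_* u_\rho\ge u$. Second, I would introduce the upper half-relaxed limit $\overline u(x)=\limsup^*_{\rho\to 0}u_\rho(x)$, which is well defined and bounded by the previous uniform estimate, and show that it is a viscosity subsolution of \eqref{eq:effprob}. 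Given a test function $\phi$ touching $\overline u$ from above at $x_0$, the standard stability device yields sequences $\rho_k\to 0$ and $x_k\to x_0$ along which $\phi$ touches $u_{\rho_k}$ from above, so that
\[
u_{\rho_k}(x_k)+H(\xi_k,D\phi(x_k),D^2\phi(x_k))+\lambda(\xi_k)\le 0
\]
for a minimizer $\xi_k\in\overline{B_{\rho_k}(x_k)}$. Since $|\xi_k-x_k|\le\rho_k$, one has $\xi_k\to x_0$, and the continuity of $H$ from $(H)$ together with the H\"older continuity of $\lambda$ from Corollary~\ref{corollary:A} gives, in the limit, the subsolution inequality $\overline u(x_0)+H(x_0,D\phi(x_0),D^2\phi(x_0))+\lambda(x_0)\le 0$. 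Comparison for \eqref{eq:effprob} then forces $\overline u\le u$, and combining this with $\liminf_* u_\rho\ge u$ yields $\overline u=\liminf_* u_\rho=u$, which is exactly the local uniform convergence of $u_\rho$ to $u$.

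The main obstacle I anticipate is the first step: verifying the comparison principle for \eqref{eq:effprobrho} requires handling a mixed modulus of continuity in the doubling-of-variables argument, since the H\"older exponent $\beta$ supplied by $\lambda$ is generally worse than the Lipschitz-type control inherited from $H$. This is, however, precisely the configuration already addressed in \cite{CIL92} and tacitly invoked in Lemma~\ref{lemma:ex_u}-(i). Once this is settled, the remainder is a textbook application of the half-relaxed limits method combined with the almost tautological upper bound $\overline H_\rho\le H+\lambda$.
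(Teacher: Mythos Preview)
Your proof is correct and follows essentially the same strategy as the paper for existence and uniqueness: verify that $\overline H_\rho$ inherits the structural hypotheses needed for comparison, then apply Perron's method with the constant barriers $\pm(C+C_1)$. For the convergence, the paper simply invokes a black-box stability result from \cite{BCD}, whereas you spell out a half-relaxed limits argument by hand; your treatment is in fact slightly sharper, since the observation $\overline H_\rho\le H+\lambda$ gives the pointwise inequality $u_\rho\ge u$ for every $\rho$ (not merely in the limit), which handles one side without any relaxed-limit machinery. The other direction is the standard stability argument, and your identification of the minimizer $\xi_k\in\overline{B_{\rho_k}(x_k)}$ with $\xi_k\to x_0$ is exactly what makes the subsolution inequality pass to the limit. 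The only point worth tightening is cosmetic: in the stability step one usually perturbs $\phi$ to make the maximum strict before extracting $(\rho_k,x_k)$, but this is routine and does not affect the argument.
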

\begin{proof}
We observe that assumptions~$(C)$ and~$(L)$ and Corollary~\ref{corollary:A} ensure $|\overline H_\rho(x,0,0)|\leq C+C_1$ and that the comparison principle holds for problem~\eqref{eq:effprobrho}.
By the same arguments of the proof of Lemma~\ref{lemma:Exis_udelta}, there exists a unique bounded solution~$u_\rho$ to problem~\eqref{eq:effprobrho} which moreover fulfills: $\|u_\rho\|_\infty\leq C+C_1$ for any $\rho\in(0,1)$.

Moreover, since the sequence $\left\{u_\rho\right\}_\rho$ is equibounded, invoking \cite[Theorem V.1.7 or Corollary V.1.8]{BCD} we get that $u_\rho$ converges locally uniformly to~$u$ as $\rho\to0$.
\end{proof}
Before proving Theorem~\ref{thm:rategc}, we state the following proposition whose proof is postponed at the end of this section.

\begin{prop}\label{prop:ratehgc}
Assume $(H1)$, $(H2)$, $(H3)$, $(H4)$.  Let $u^\eps$ be the solution to the equation \eqref{eq:HJ-eps3} and $u_\rho$ the  solution to problem \eqref{eq:effprobrho}. For $\eps$ sufficiently small and for every compact $\mathcal{K} \subset \R^m$, there exists $K\geq 0$ (independent of $\rho \in (0,1)$) such that:
$$
u^\eps(x,y)-u_\rho(x) \le K
\left(\eps|\log\eps|\right)^{\frac{\beta}{2}}
\qquad \forall (x,y)\in \R^n\times \mathcal{K},
$$
where $\beta$ is the constant introduced in assumptions $(H3)-(H4)$.
\end{prop}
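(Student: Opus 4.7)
The plan is to run a doubling-of-variables argument of Capuzzo Dolcetta--Ishii type, adapted to the second-order case in the spirit of \cite{CM11, CCM11} and tailored to the unbounded fast variable~$y$ by incorporating the Lyapunov function $\chi(y)=|y|^2$ from~\eqref{lyapounov} as a penalization and the cell-problem corrector $w(\cdot;\xi)$ from Proposition~\ref{prop:nota} as a perturbed test function. Fix $(x_0,y_0)\in\R^n\times\mathcal K$ together with small parameters $\sigma,\eta,\rho>0$ and a freezing point $\xi\in\R^n$, and introduce
\[
\Phi(x,\bar x,y):=u^\eps(x,y)-u_\rho(\bar x)-\tfrac{|x-\bar x|^2}{\sigma^2}-\eps\,w(y;\xi)-\eta\,\chi(y).
\]
The doubling in $(x,\bar x)$ accommodates the viscosity character of $u^\eps$ and $u_\rho$; the term $\eps w(y;\xi)$ will cancel the singular contribution $\eps^{-1}\mathcal L$ via the cell problem~\eqref{eq:cellpb3}; and $\eta\chi(y)$ ensures that the supremum of $\Phi$ is attained at some $(\hat x,\hat{\bar x},\hat y)$, since by Lemma~\ref{estchiholder}~(i) the corrector has only logarithmic growth in~$y$, strictly dominated by $\eta\chi(y)$.

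I would then combine the penalty inequality $\Phi(\hat x,\hat{\bar x},\hat y)\geq\Phi(x_0,x_0,y_0)$ with the boundedness of $u^\eps,u_\rho$ and Lemma~\ref{estchiholder}~(i) to deduce $|\hat x-\hat{\bar x}|\leq C\sigma$ and $\eta|\hat y|^2\leq C(1+\eps|\log\eta|)$, so that $|\hat y|\leq C\eta^{-1/2}$ and $\eps[w(y_0;\xi)-w(\hat y;\xi)]\leq C\eps|\log\eta|$. Ishii's lemma in $(x,\bar x)$ furnishes a common gradient $p=2(\hat x-\hat{\bar x})/\sigma^2$ and matrices $X\leq Y$ of norm $\leq C/\sigma^2$; by linearity of~$\mathcal L$ and the identity $\mathcal L(\hat y,D_yw(\hat y;\xi),D^2_{yy}w(\hat y;\xi))=\lambda(\xi)-f(\xi,\hat y)$, the subsolution inequality for $u^\eps$ at $(\hat x,\hat y)$ rewrites as
\[
u^\eps(\hat x,\hat y)+H(\hat x,p,X)+\lambda(\xi)+[f(\hat x,\hat y)-f(\xi,\hat y)]+\tfrac{\eta}{\eps}\mathcal L(\hat y,D\chi,D^2\chi)\leq 0,
\]
while the supersolution inequality for $u_\rho$ at $\hat{\bar x}$, after selecting any $\xi'\in\overline B_\rho(\hat{\bar x})$ in the minimum defining $\overline H_\rho$, reads $u_\rho(\hat{\bar x})+H(\xi',p,Y)+\lambda(\xi')\geq 0$. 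Subtracting and discarding the matrix contribution via $X\leq Y$ and the degenerate ellipticity of $H$ from~$(H)$, bounding the $f$-mismatch by $(H4)$, exploiting that $-\mathcal L(y,D\chi,D^2\chi)\leq C$ uniformly in~$y$ (Lyapunov property), and coupling $\xi'$ with $\xi$ so that $\lambda(\xi)-\lambda(\xi')$ vanishes, one reaches
\[
u^\eps(\hat x,\hat y)-u_\rho(\hat{\bar x})\leq C|\xi-\hat x|\bigl(1+|p|+\|X\|\bigr)+C|\xi-\hat x|^\beta+\tfrac{C\eta}{\eps}.
\]
Feeding this back into $\Phi(\hat x,\hat{\bar x},\hat y)\geq\Phi(x_0,x_0,y_0)$, setting $\eta=\eps^2$ to make the Lyapunov contributions $O(\eps|\log\eps|)$, and balancing $\sigma$ and~$\rho$ (using $|\xi-\hat x|\leq C(\sigma+\rho)$ and $\|X\|\leq C/\sigma^2$) produces the announced rate $(\eps|\log\eps|)^{\beta/2}$.

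The hardest point is the choice of the freezing parameter~$\xi$. Because Proposition~\ref{prop:stimaccia}~(v) only provides Hölder (not $C^1$) dependence of $w$ on~$\xi$, the corrector $\eps w(y;\xi)$ cannot be included in the test function with $\xi$ depending on the unknown maximizer. At the same time, $\xi$ must lie in $\overline B_\rho(\hat{\bar x})$ (so that the $\overline H_\rho$--minimization may be realized at $\xi'=\xi$, killing $\lambda(\xi)-\lambda(\xi')$) and must keep $|\xi-\hat x|$ small (otherwise the second-order Hamiltonian modulus $|\xi-\hat x|\,\|X\|\sim|\xi-\hat x|/\sigma^2$ blows up). I would resolve this by introducing $\xi$ as an additional doubling variable with a quadratic penalty $|x-\xi|^2/\mu^2$, using Proposition~\ref{prop:stimaccia}~(v) to quantify $|\hat\xi-\hat x|$ in terms of $\mu$ and $\eps$; the resulting gradient mismatch $|p^+-p^-|\leq 2|\hat\xi-\hat x|/\mu^2$ is then absorbed by the Lipschitz-in-$p$ modulus of~$H$ and included in the final balancing of parameters, which is the delicate computational part that yields the exponent~$\beta/2$.
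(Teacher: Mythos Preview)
Your strategy has the right ingredients---the corrector as a perturbed test function, the Lyapunov penalty $\eta\chi$, and the approximated Hamiltonian $\overline H_\rho$---but there is a genuine gap in how you compare $H$ at two different spatial points. When you select $\xi'=\xi$ in the minimum defining $\overline H_\rho$ (so as to cancel $\lambda(\xi)-\lambda(\xi')$), the subtraction forces you to estimate $H(\hat x,p,X)-H(\xi,p,Y)$, and assumption~$(H)$ only gives
\[
\bigl|H(\hat x,p,Y)-H(\xi,p,Y)\bigr|\le C\,|\hat x-\xi|\bigl(1+|p|+\|Y\|\bigr).
\]
The matrix norm $\|Y\|$ coming from Ishii's lemma is of order $\sigma^{-2}$ (in fact $\|Y\|\le 1/\bar\eps+\|A\|$, which blows up as $\bar\eps\to0$), so the product $|\hat x-\xi|\,\|Y\|$ cannot be balanced against the other errors uniformly in the doubling parameter and in~$\rho$. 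Your claim $|\xi-\hat x|\le C(\sigma+\rho)$ is not justified by anything in the construction, and the additional doubling in $\xi$ you propose in the last paragraph controls $|\hat\xi-\hat x|$ via Proposition~\ref{prop:stimaccia}~(v) but does nothing about the factor~$\|Y\|$. (A minor separate point: without a term like $\sigma'|x|^2$ your $\Phi$ need not attain its supremum, since $u^\eps$ and $u_\rho$ are only bounded.)

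The paper avoids this difficulty by two moves that are absent from your sketch. First, it begins with the $x$-\emph{dependent} corrector $\eps w(y;x)$ in $\psi$ (together with the penalty $\sigma|x|^2$) and locates the global maximum $(\bar x,\bar y)$; only then is the corrector frozen at $\bar x$, and a localization term $c|x-\bar x|^2$ is added. Proposition~\ref{prop:stimaccia}~(v) is used to show that on the boundary of a cylinder $B(\bar x,r)\times B(0,\tilde C/\sqrt{\eps\eta})$ the frozen function $\tilde\psi$ lies below $\tilde\psi(\bar x,\bar y)$, which forces the new maximizer $(\tilde x,\tilde y)$ to satisfy $|\tilde x-\bar x|\le r$; this is exactly where the constraint $cr^{2-\beta}\gtrsim\eps|\log\eps|$ and the final balancing $c=r^\beta=(\eps|\log\eps|)^{\beta/2}$ come from. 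Second, and crucially, in the $\overline H_\rho$ step the paper inserts the $u^\eps$-point $x_\tau$ itself (not the freezing point) into the minimum: since the doubling $\tau\to\infty$ drives $|x_\tau-\xi_\tau|\to0<\rho$, one has $\overline H_\rho(\xi_\tau,p,X_2)\le H(x_\tau,p,X_2)+\lambda(x_\tau)$. Hence $H$ is evaluated at the \emph{same} spatial point on both sides and the matrix norm $\|X_2\|$ never multiplies a spatial increment; the remaining mismatches $\lambda(\bar x)-\lambda(x_\tau)$ and $f(x_\tau,y_\tau)-f(\bar x,y_\tau)$ are controlled by $C|x_\tau-\bar x|^\beta\le Cr^\beta$ via Corollary~\ref{corollary:A} and~$(H4)$.
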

Now we prove Theorem \ref{thm:rategc}.
\begin{proof}[Proof of Theorem \ref{thm:rategc}]
By Lemma \ref{lem:approxrho} and Proposition \ref{prop:ratehgc}, as $\rho \to 0^+$, we obtain that, for every compact $\mathcal{K} \subset \R^m$, there exists $K\geq 0$ (independent of $\rho \in [0,1)$) such that:
$$
u^\eps(x,y)-u(x)\le K
\left(\eps|\log\eps|\right)^{\frac{\beta}{2}}\qquad \forall (x,y)\in \R^n\times \mathcal{K}.
$$
The other inequality of the statement is established in a similar manner so we omit the details of its proof and, for the sake of completeness, we just describe the main steps. For
\begin{equation*}
\overline H^\rho(x,p,X)=\max_{\xi \in B_\rho(x)}\left[H(\xi, p, X)+\lambda(\xi)\right],
\end{equation*}
consider the problem
\begin{equation}\label{16bis}
u^\rho(x)+\overline H^\rho\left(x, D_x u^\rho, D^2_{xx}u^\rho\right)=0.
\end{equation}
By the same arguments as those in the proof of Lemma~\ref{lem:approxrho}, we obtain that problem~\eqref{16bis} has a unique bounded solution~$u^\rho$ and that $\{u^\rho\}_\rho$ converges locally uniformly to~$u$ as $\rho\to0^+$.
Arguing as in the proof of Proposition \ref{prop:ratehgc}, we obtain that
for every compact $\mathcal{K} \subset \R^m$, there exists $K\geq 0$ (independent of $\rho \in (0,1)$) such that:
$$
u^\eps(x,y)-u^\rho(x)\ge K
\left(\eps|\log\eps|\right)^{\frac{\beta}{2}}\qquad \forall (x,y)\in \R^n\times \mathcal{K}.
$$
Indeed, we observe that the function $\psi(x,y)=u^\eps(x,y)-u_\rho(x)-\eps w(y;x)+\eps \eta \chi(y)+\sigma |x|^2$ attains its global minimum at some point~$(\bar x, \bar y)$ and the analogous estimates to those of Lemma \ref{lem:est} apply. Moreover we can apply  ii) of Lemma \ref{estchiholder} to estimate from below the function  $-w(y;x)+\eta \chi(y)$. 
The rest of the computations follows similarly as above, using that $u^\rho$ is a subsolution of equation \eqref{16bis} and that $u^\eps$ is a supersolution of \eqref{eq:HJ-eps3}.
\end{proof}
Next we prove Proposition \ref{prop:ratehgc}.
\begin{proof}[Proof of Proposition \ref{prop:ratehgc}]
We consider the function 
\begin{equation}\label{eq:psiprop11}
\psi(x,y)=u^\eps(x,y)-u_\rho(x)-\eps w(y;x)-\eps\eta\chi(y)-\sigma |x|^2,
\end{equation}
where $\rho, \eps, \eta, \sigma >0$ are sufficiently small and will be chosen later on. The function $\psi$ attains its global maximum at some point $(\bar x, \bar y)$. 
We introduce a new function
\begin{equation}\label{eq:tildepsiprop11}
\tilde{\psi}(x,y):=u^\eps(x,y)-u_\rho(x)-\eps w(y;\bar x)-\eps \eta \chi(y)-\sigma|x|^2-c|x-\bar x|^2
\end{equation}
for some $c>0$ to be fixed later on. We observe that
\begin{equation}\label{eq:comppsi2}
\tilde{\psi}(\bar x,  y)=\psi(\bar x,  y).
\end{equation}
Consider $\overline{B(\bar x, r)\times B\left(0,\frac{\tilde{C}}{\sqrt{\eps\eta}}\right)}:=\mathcal{B}\subset \R^n\times \R^m,$  where $\tilde C>0, r\in (0,1)$ will be chosen later on. Let us emphasize that $\tilde C$ will be chosen independent of $\eps$, $\eta$, $\rho$ and~$\sigma$. We observe that for each $(x,y) \in \partial \mathcal{B}$ we have the following cases.\\
\textbf{Case a:} \quad $x \in \partial B(\bar x, r)$. We have
\begin{equation}\label{eq:tildepsipsi}
\tilde\psi(x,y)\leq \psi(x,y) \quad \forall (x,y)  \in \partial B(\bar x, r)\times B\left(0,\frac{\tilde C}{\sqrt{\eps\eta}}\right)
\end{equation}
if and only if 
\begin{equation}\label{20}
cr^2\geq \eps [w(y;x)-w(y;\bar x)], \quad \forall y \in B\left(0,\frac{\tilde C}{\sqrt{\eps\eta}}\right).
\end{equation}
We observe that Proposition~\ref{prop:stimaccia}-(v) ensures
\begin{eqnarray*}
|w(y;x)-w(y;\bar x)|\leq C_6 r^\beta\left(1+\log\left(1+|y|^2\right)\right)\leq  \bar Cr^\beta\left(1+ \left|\log(\eps \eta)\right|\right)
\quad \forall y \in B\left(0,\frac{\tilde C}{\sqrt{\eps\eta}}\right)
\end{eqnarray*}
for some $\bar C>0$ independent of~$\eps$, $\eta$, $\rho$ and $\sigma$ (because $\tilde C$ will fulfill such independence). Hence, in order to have~\eqref{20}, it suffices to have
\begin{equation}\label{eqn:cr_holder}
cr^{2-\beta}\geq \bar C\eps\left(1+ \left|\log(\eps \eta)\right|\right).
\end{equation}
We remark that our choice for $r$ and $c$ will fulfill this inequality. Therefore if the previous condition holds and using \eqref{eq:comppsi2}, we have
\begin{equation}\label{eq:tildepsipsicasea}
\tilde \psi(x,y)\leq \psi(x,y)\leq \psi(\bar x, \bar y)=\tilde \psi(\bar x, \bar y) \quad \forall (x,y)  \in \partial B(\bar x, r)\times B\left(0,\frac{\tilde C}{\sqrt{\eps\eta}}\right).
\end{equation}
\textbf{Case b:} \quad $y  \in \partial B\left(0,\frac{\tilde C}{\sqrt{\eps\eta}}\right)$. In order to have
\begin{equation}\label{eq:tildepsipsizero}
\tilde{\psi}(x,y)\leq \psi(\bar x,0), \quad \forall (x,y)\in B(\bar x, r)\times \partial B\left(0,\frac{\tilde C}{\sqrt{\eps\eta}}\right)
\end{equation}
it suffices 
$\eps \eta \chi(y)\geq 4M+\eps |w(y; \bar x)|-\sigma\left(|x|^2-|\bar x|^2\right),$
where $M=\max\left\{||u^\eps||_\infty, ||u_\rho||_\infty\right\}$.
By Lemma \ref{estchiholder} i) and recalling that $\chi(y)=\tilde C^2(\eps\eta)^{-1}$ for $y  \in \partial B\left(0,\frac{\tilde C}{\sqrt{\eps\eta}}\right)$, it suffices to have
$$
\tilde C^2\geq 4M+\eps C_5\left(1+\log\left(1+\frac{\tilde C^2}{\eps \eta}\right)\right)+\sigma\left(|\bar x|^2-|x|^2\right).
$$
Note that
$$
\sigma\left(|\bar x|^2-| x|^2\right)= \sigma(|\bar x|-|x|)(|\bar x|+|x|)
\leq \sigma r(2|\bar x|+r);
$$
hence, since $r \in (0,1)$, it suffices to have
$$
\tilde C^2\geq 4M+\eps C_5\left(1+\log\left(1+\frac{\tilde C^2}{\eps \eta}\right)\right)+\sigma (2|\bar x|+1),
$$
and recalling that by Lemma \ref{lem:est} i) (with $v=u_\rho$), for $\eps$ sufficiently small and as $\sigma \to 0$, we have
\begin{equation}\label{limsigmabis}
\lim_{\sigma \to 0}\sigma |\bar x|=0,
\end{equation}
we deduce that the previous inequality always holds for $\tilde C$ large enough when $\sigma $ and $\eps$ tend to zero (and, as we  will choose, $\eta=\eps$). (Note that, for $\eta=\eps$ sufficiently small and $\sigma$ sufficiently small, we can choose $\tilde C=\sqrt{5M}$).
Therefore by \eqref{eq:comppsi2} and if the previous condition holds, we have
\begin{equation}\label{eq:tildepsipsicaseb}
\tilde \psi(x,y)\leq \psi(\bar x, 0)\leq \psi(\bar x, \bar y)=\tilde \psi(\bar x, \bar y)\quad \forall (x,y)  \in  B(\bar x, r)\times \partial B\left(0,\frac{\tilde C}{\sqrt{\eps\eta}}\right).
\end{equation}
In conclusion, we have
\begin{equation}\label{eq:tildepsipsitotal}
\tilde{\psi}(x,y)\leq \tilde{\psi}(\bar x, \bar y), \quad \forall (x,y) \in \partial \mathcal{B}.
\end{equation}
For $\eps$ and $\sigma$ small enough, by Lemma \ref{lem:est}  (with $v=u_\rho$), and for $\tilde C$ enough large (it is enough to choose $\tilde C=\sqrt{13M}$), the point~$(\bar x, \bar y)$ belongs to~$\mathcal{B}$ and consequently, the function $\tilde{\psi}$ attains a (local) maximum at some point $(\tilde x, \tilde y)\in \mathcal{B}$, namely the function
$$
(x,y)\mapsto u^\eps(x,y)-\left[u_\rho(x)+\eps w(y;\bar x)+\eps\eta\chi(y)+\sigma|x|^2+c|x-\bar x|^2\right]
$$
attains a maximum at $(\tilde x, \tilde y)$.
Without loss of generality, we may assume that this maximum is strict and global. Indeed, otherwise, we can add to $u^\eps$ some smooth function vanishing with its first and second derivatives at $(\tilde x, \tilde y)$.
For any $\tau \in (0,1)$, we introduce the function
\begin{equation}\label{Psi}
\Psi(x, y, \xi)= u^\eps(x,y)-u_\rho(\xi)-\left[\eps w(y; \bar x)+\eps\eta\chi(y)+\sigma|x|^2+c|x-\bar x|^2+\tau|x-\xi|^2\right].
\end{equation}
By standard theory, we infer that the function $\Psi$ attains a maximum at some point $(x_\tau, y_\tau, \xi_\tau)$ with $x_\tau, \xi_\tau \to \tilde x$ and $y_\tau \to \tilde y$ as $\tau \to \infty$. 
We apply \cite[Theorem $3.2$]{CIL92} with $\mathcal{O}_1=\R^n \times \R^m$, $\mathcal{O}_2=\R^n$, $u_1=u^\eps$, $u_2=-u_\rho$ and
\[
\phi(x, y, \xi)=\eps w(y; \bar x)+\eps \eta \chi(y)+\sigma |x|^2+c|x-\bar x|^2+\tau|x-\xi|^2;
\]
for every $\bar \eps \in (0,1)$, there exist two matrices $X_1=\begin{pmatrix} X_{xx} & X_{xy} \\ X_{yx} & X_{yy} \end{pmatrix} \in \mathbb{M}^{n+m}$ and $X_2 \in \mathbb{M}^n$ such that
\begin{eqnarray}
&& \left(\left(2\sigma x_\tau+2c(x_\tau-\bar x)+2\tau(x_\tau-\xi_\tau), \eps D_y w(y_\tau; \bar x)+\eps \eta D_y \chi(y_\tau)\right), X_1\right) \in  J^+_{(x_\tau, y_\tau)} u^\eps\label{I1}\\
&& \left(2\tau(x_\tau-\xi_\tau), X_2\right)\in J^-_{\xi_\tau} u_\rho\label{II1}\\
&&\begin{pmatrix}X_1 & 0 \\ 0 &-X_2\end{pmatrix} \leq A +\bar \eps A^2, \textrm{ with $A=D^2_{x, y, \xi}\phi(x_\tau, y_\tau, \xi_\tau)$}\label{III}
\end{eqnarray}
where $J^+_{(x_\tau, y_\tau)} u^\eps$, $J^-_{\xi_\tau} u_\rho$ and $D^2_{x, y, \xi}\phi$ denote respectively the superjet of $u^\eps$ at $(x_\tau, y_\tau)$, the subjet of $u_\rho$ at $\xi_\tau$ and the Hessian of $\phi$ with respect to the three variables $(x, y, \xi)$.
Moreover, testing~\eqref{III} with vectors of the form $(v,0,v)$ and $(0,w,0)$, we get respectively
\begin{equation}\label{III1e2}
X_{xx}-X_2\leq 2(\sigma +c)I+\bar \eps ||A^2||I\qquad\textrm{and}\qquad
X_{yy}\leq \eps \eta D^2_{yy}\chi(y_\tau)+\eps D^2_{yy}w(y_\tau; \bar x) + \bar \eps ||A^2||I.
\end{equation}
Since $u^\eps$ is a viscosity subsolution of equation \eqref{eq:HJ-eps3}, by~\eqref{I1} we infer
\begin{multline*}
0\geq u^\eps(x_\tau, y_\tau)+H\left(x_\tau, 2\sigma x_\tau +2c(x_\tau-\bar x)+2\tau(x_\tau-\xi_\tau), X_{xx}\right)\\+\frac{1}{\eps}\mathcal{L}\left(y_\tau, \eps D_y w(y_\tau; \bar x)+\eps \eta D_y \chi(y_\tau), X_{yy}\right)+f(x_\tau, y_\tau).
\end{multline*}
Note $\mathcal{L}$ is uniformly elliptic by definition and $H$ is degenerate elliptic by assumption $(H)$. Then, by \eqref{III1e2}, we get
\begin{multline*}
0\geq u^\eps(x_\tau, y_\tau)+H\left(x_\tau, 2\sigma x_\tau +2c(x_\tau-\bar x)+2\tau(x_\tau-\xi_\tau), X_2+2(\sigma +c)I+\bar \eps ||A^2||I\right)\\+\frac{1}{\eps}\mathcal{L}\left(y_\tau, \eps D_y w(y_\tau; \bar x)+\eps \eta D_y \chi(y_\tau), \eps D_{yy }^2 w(y_\tau; \bar x)+\eps \eta D_{yy}^2\chi(y_\tau) +\bar \eps ||A^2||I\right)+f(x_\tau, y_\tau),
\end{multline*}
which, by linearity of $\mathcal{L}$, implies 
\begin{multline}\label{eq:Heps2}
0\geq u^\eps(x_\tau, y_\tau)+H\left(x_\tau, 2\sigma x_\tau +2c(x_\tau-\bar x)+2\tau(x_\tau-\xi_\tau), X_2+2(\sigma +c)I+\bar \eps ||A^2||I\right)\\+\mathcal{L}\left(y_\tau,  D_y w(y_\tau, \bar x), D_{yy}^2 w(y_\tau; \bar x)\right)+\eta \mathcal{L}\left(y_\tau, D_y \chi(y_\tau), D^2_{yy} \chi(y_\tau)\right)+\frac{\bar \eps}{\eps} ||A^2||\mathcal{L}(y_\tau, 0, I)+f(x_\tau, y_\tau).
\end{multline}
By \eqref{eq:cellpb3}, we have
\begin{eqnarray*}
\mathcal{L}\left(y_\tau, D_yw(y_\tau;\bar x), D_{yy}^2 w(y_\tau;\bar x)\right)+f(x_\tau,y_\tau)&=&\lambda(\bar x)+\left[f(x_\tau,y_\tau)- f(\bar x, y_\tau)\right]\\&=& \lambda(x_\tau)+\left[\lambda(\bar x)-\lambda(x_\tau)\right]+\left[f(x_\tau,y_\tau)- f(\bar x, y_\tau)\right].
\end{eqnarray*}
By assumption~$(H4)$ and by Corollary~\ref{corollary:A}, we have
\[
f(x_\tau,y_\tau)- f(\bar x,y_\tau)\geq -C_3|x_\tau-\bar x|^\beta\qquad \textrm{and}\qquad
\lambda(\bar x)-\lambda(x_\tau)
\geq -C_3|x_\tau-\bar x|^\beta.
\]
Replacing these relations into \eqref{eq:Heps2}, we have
\begin{multline*}
0\geq u^\eps(x_\tau, y_\tau)+H\left(x_\tau, 2\sigma x_\tau +2c(x_\tau-\bar x)+2\tau(x_\tau-\xi_\tau), X_2+2(\sigma +c)I+\bar \eps ||A^2||I\right)\\+\lambda(x_\tau)+\eta \mathcal{L}\left(y_\tau, D_y \chi(y_\tau), D^2_{yy} \chi(y_\tau)\right)+\frac{\bar \eps}{\eps} ||A^2||\mathcal{L}(y_\tau, 0, I)-2C_3|x_\tau-\bar x|^\beta.
\end{multline*}
Using the linear growth of $H$ w.r.t. to $p$ and $X$ (see assumption $(H)$) and the boundedness of the matrix $\tau$ (see assumption~$(L)$), we obtain
\begin{multline*}
0\geq u^\eps(x_\tau, y_\tau)+H\left(x_\tau, 2\tau(x_\tau-\xi_\tau), X_2\right)+\lambda(x_\tau)+\eta \mathcal{L}\left(y_\tau, D_y \chi(y_\tau), D^2_{yy} \chi(y_\tau)\right)+\\-C\left[\sigma |x_\tau|+c|x_\tau-\bar x|+|x_\tau-\bar x|^\beta+\sigma +c+\bar \epsilon ||A^2||+\frac{\bar \eps}{\eps} ||A^2||\right],
\end{multline*}
for some constant $C$ independent of $\tau, \sigma, \eps, \bar \eps$ and $\rho$. Note that from now on, with some abuse of notation, we will denote by $C$ different constants independent of $\tau, \sigma,\eps, \bar \eps$ and $\rho$.

Since $u_\rho$ is a supersolution to \eqref{eq:effprobrho}, by~\eqref{II1}, we have
$
u_\rho(\xi_\tau)+\overline H_\rho\left(\xi_\tau, 2 \tau(x_\tau-\xi_\tau), X_2\right)\geq 0.
$
We recall that $\xi_\tau \to \tilde x$ and $x_\tau \to \tilde x$ as $\tau \to \infty$; hence, for $\tau$ sufficiently small, by the definition of~$\overline H_\rho$ in~\eqref{eq:effbarrhoH}, we have
\begin{equation}\label{eq:barHrhomaj}
\overline H_\rho(\xi_\tau, 2 \tau(x_\tau-\xi_\tau), X_2)\leq  H(x_\tau, 2 \tau(x_\tau-\xi_\tau), X_2)+\lambda(x_\tau).
\end{equation}
By the last three inequalities, we get
\begin{multline*}
0\geq u^\eps(x_\tau, y_\tau)-u_\rho(\xi_\tau)+\eta \mathcal{L}\left(y_\tau, D_y \chi(y_\tau), D^2_{yy} \chi(y_\tau)\right)+\\-C\left[\sigma |x_\tau|+c|x_\tau-\bar x|+|x_\tau-\bar x|^\beta+\sigma +c+\bar \epsilon ||A^2||+\frac{\bar \eps}{\eps} ||A^2||\right].
\end{multline*}
Passing to the limit as $\bar \eps \to 0^+$ and recalling  that $C$ does not depend on $\bar \eps$, we obtain
$$
0\geq u^\eps(x_\tau, y_\tau)-u_\rho(\xi_\tau)+\eta \mathcal{L}\left(y_\tau, D_y \chi(y_\tau), D^2_{yy} \chi(y_\tau)\right)-C\left[\sigma |x_\tau|+c|x_\tau-\bar x|+\sigma +c+|\bar x -x_\tau|^\beta\right].
$$
Now passing to the limit as $\tau \to \infty$ and recalling that $C$ is independent of $\tau$ and that $x_\tau \to \tilde x$, $y_\tau \to \tilde y$ and $\xi_\tau\to \tilde x$ as $\tau \to + \infty$, we get
$$
0\geq u^\eps(\tilde x, \tilde y)-u_\rho(\tilde x)+\eta \mathcal{L}\left(\tilde y, D_y \chi(\tilde y), D^2_{yy} \chi(\tilde y)\right)-C\left[\sigma |\tilde x|+c|\tilde x-\bar x|+\sigma +c+|\bar x -\tilde x|^\beta\right],
$$
and since $|\tilde x -\bar x|\leq r$, we obtain
\begin{equation}\label{eqn:eqtest2}
u^\eps(\tilde x, \tilde y)-u_\rho(\tilde x)\leq C\left[\sigma(|\tilde x|+1)+cr+c+r^\beta\right]-\eta \mathcal{L}\left(\tilde y, D_y\chi(\tilde y), D^2_{yy}\chi(\tilde y)\right).
\end{equation}
Note that, by \eqref{lyapounov}, there exists  $\tilde{L}>0$ such that 
\begin{equation}\label{eq:lyapproof}
-\eta \mathcal{L}\left(\tilde y, D_y \chi(\tilde y), D^2_{yy} \chi(\tilde y)\right)\leq -\eta \tilde{L}.
\end{equation}
Moreover for every $(x,y) \in \R^n\times \R^m$, there holds
$
\psi(x,y)\leq \psi(\bar x, \bar y)=\tilde \psi(\bar x, \bar y)\leq \tilde \psi(\tilde x, \tilde y).
$
Namely, by using the previous relation, \eqref{eqn:eqtest2}, \eqref{eq:lyapproof} and Lemma \ref{estchiholder}-(ii), we have
\begin{eqnarray}\label{eqn:eqtest3bis}
&&u^\eps(x,y)-u_\rho(x)-\eps w(y;x)-\eps\eta\chi(y)-\sigma|x|^2\nonumber\\\nonumber &&\qquad\leq u^\eps(\tilde x, \tilde y)-u_\rho(\tilde x)-\eps w(\tilde y;\bar x)-\eps\eta \chi(\tilde y)-\sigma |\tilde x|^2-c|\tilde x-\bar x|^2\\ &&\qquad\leq C\left[\sigma(|\tilde x|+1)+cr+c+r^\beta\right]-\eta \tilde{L}+\eps\left[C_5\log C_5-C_5\log \eta+\eta\right].
\end{eqnarray}
By \eqref{limsigmabis} we deduce
\begin{equation}\label{limsigmabisbis}
\lim_{\sigma \to 0}\sigma |\tilde x|\leq\lim_{\sigma \to 0}[\sigma |\bar x|+\sigma|\tilde x-\bar x|]=\lim_{\sigma \to 0}\sigma |\tilde x-\bar x|\leq r\lim_{\sigma \to 0}\sigma=0.
\end{equation}
Letting $\sigma \to 0$ in  \eqref{eqn:eqtest3bis}, by \eqref{limsigmabisbis}, and choosing $\eta=\eps$, we obtain
\begin{equation*}
u^\eps(x,y)-u_\rho(x)\leq\eps\left[w(y;x)+\eps\chi(y)\right]+C\left[cr+c+r^\beta+\eps+\eps\left|\log\eps\right|+\eps^2\right].
\end{equation*}
Since we must choose $c$ and $r$ so to fulfill \eqref{eqn:cr_holder}, we take
$$
c=r^\beta=\left[\bar C\eps\left(1+2\left|\log\eps\right|\right)\right]^{\frac{\beta}{2}}
$$
and we get
\begin{multline*}
u^\eps(x,y)-u_\rho(x)\leq\eps\left[w(y;x)+\eps\chi(y)\right]+C\left[\eps\left(1+\left|\log\eps\right|\right)\right]^{\frac{\beta}{2}}\\
\leq\eps\left(\max_{\mathcal{K}}|w|+\eps \max_{\mathcal{K}}|\chi|\right)+C\left[\eps(1+|\log\eps|)\right]^{\frac{\beta}{2}};
\end{multline*}
hence, we obtain the statement where $K$ depends on $C,  \max_{\mathcal{K}}|w|, \max_{\mathcal{K}}|\chi|$.
\end{proof}
\begin{remark}
In fact, we proved that there exists a positive constant $K$ such that
$$u^\eps(x,y)-u_\rho(x)\le K\left[[\eps(1+|\log\eps|)]^{\frac{\beta}{2}}+\eps\log(1+|y|^2)+\eps^2|y|^2\right]\qquad \forall (x,y)\in \R^n\times \R^m.
$$
\end{remark}

\section{Particular cases}\label{sec:partcases}
In this Section we assume the following hypotheses
\begin{itemize}
\item[(LF)] the source term has separated variables: $f(x,y)=h(x)g(y)$ with $g$ bounded,  and Lipschitz continuous;
\item[(HF)] $h$ is bounded, $\|h\|_\infty\leq  C_1$, and Lipschitz continuous with Lipschitz constant~$L_h$.
\end{itemize}
Note that assumption~$(H2)$ is no more required.
Under these hypotheses, we obtain the same rate of convergence as in Theorem~\ref{thm:rategc} with~$\beta=1$ but without requiring $(H2)$: see Theorem~\ref{thm:ratehlip} below.
Moreover, for $h \in C^2$, we get a better rate of convergence: see Theorem~\ref{thm:ratehreg} below.

As in Section~\ref{sec:holder}, we first tackle the cell problem~\eqref{eq:cellpb3} because we need the uniqueness of its solution~$(w,\lambda)=(w(y;x),\lambda(x))$, the sublinear growth of~$w(\cdot;x)$, some regularity of~$\lambda$ and a continuous dependence estimate of~$w$ in~$x$.
These properties are established in the following statements; note that the results of Section~\ref{sec:holder} do not apply because they rely on Theorem~\ref{holder} which requires assumption~$(H2)$.
\begin{prop}
Assume $(LF)$ and $(HF)$. For any $x \in \R^n$ fixed, there exists a unique $\lambda(x)$ such that \eqref{eq:cellpb3} has a unique solution. The unique solution of \eqref{eq:cellpb3} is given by
$$
w(y;x)=h(x)w_*(y),
$$
where $(w_*,\lambda_1) \in C^2(\R^m)\times\R$ is the unique solution to the cell problem
\begin{equation}\label{eq:cellwstar}
\mathcal{L}\left(y, D_yw_*, D^2_{yy}w_*\right)+g(y)=\lambda_1, \quad w_*(0)=0 \qquad \mbox{ in }\R^m.
\end{equation}
Moreover there holds
\begin{equation}\label{eq:lambda1spcases}
\lambda(x)=h(x)\lambda_1=h(x)\int_{\R^m} g(y)d\mu(y) \qquad \forall x \in \R^n,
\end{equation}
where $\mu$ is the unique invariant measure associated to $\mathcal{L}$.
\end{prop}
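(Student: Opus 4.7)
The plan is to reduce the problem to a single auxiliary cell problem for $(w_*,\lambda_1)$ with source $g$, and then to exploit the separated-variables structure of $f$ together with the linearity of $\mathcal{L}(y,\cdot,\cdot)$ in its last two arguments. First, I would establish existence of a pair $(w_*,\lambda_1)\in C^2(\R^m)\times\R$ solving \eqref{eq:cellwstar} via the standard approximation: let $v_\delta$ be the unique bounded viscosity solution to
$$\delta v_\delta+\mathcal{L}(y,D_yv_\delta,D^2_{yy}v_\delta)+g(y)=0\qquad\textrm{in }\R^m,$$
which satisfies $\delta\|v_\delta\|_\infty\le\|g\|_\infty$ by comparison. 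Since $g$ is Lipschitz continuous, under $(L)$ one obtains local Lipschitz bounds on $v_\delta$ independent of $\delta$ by a doubling-variables argument that uses only the Ornstein-Uhlenbeck structure and the Lyapunov function \eqref{lyapounov} (in the spirit of \cite{Ghilli18,FIL,MMT18}); this is why the argument can dispense with assumption $(H2)$. Passing to the limit $\delta\to 0^+$ along a subsequence then gives $v_\delta(\cdot)-v_\delta(0)\to w_*$ locally uniformly and $\delta v_\delta(0)\to\lambda_1$, with $(w_*,\lambda_1)$ a viscosity solution of \eqref{eq:cellwstar}; the uniform ellipticity \eqref{eq:tauund} and the Lipschitz regularity of $g$ promote $w_*$ to $C^2(\R^m)$ via \cite[Theorem 6.14]{GT}, exactly as in Proposition \ref{prop:nota}.

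Next, I would verify by direct substitution that the separated ansatz works. Since $\mathcal{L}$ is linear in $(q,Y)$,
$$\mathcal{L}\bigl(y,h(x)D_yw_*,h(x)D^2_{yy}w_*\bigr)+h(x)g(y)=h(x)\bigl[\mathcal{L}(y,D_yw_*,D^2_{yy}w_*)+g(y)\bigr]=h(x)\lambda_1,$$
so the pair $w(y;x):=h(x)w_*(y)$, $\lambda(x):=h(x)\lambda_1$ is a classical solution of \eqref{eq:cellpb3} with the normalization $w(0;x)=h(x)w_*(0)=0$. For uniqueness, I would follow the scheme used in Proposition \ref{prop:nota}: $\lambda(x)$ is characterized as $\lim_{\delta\to 0^+}\delta u_\delta(0)$, where $u_\delta$ solves \eqref{delta_ergo} with source $F(\cdot)=h(x)g(\cdot)$. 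By the linearity of the equation one has $u_\delta=h(x)v_\delta$, hence $\delta u_\delta(0)\to h(x)\lambda_1$ irrespectively of the subsequence, which forces $\lambda(x)=h(x)\lambda_1$ and, arguing on $v_\delta(\cdot)-v_\delta(0)$, identifies $w(\cdot;x)=h(x)w_*(\cdot)$ as the unique normalized corrector. The explicit formula $\lambda_1=\int_{\R^m}g\,d\mu$ is then the standard ergodic identity \eqref{lambda_intro} applied to the source $g$.

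The main obstacle is Step 1, since Theorem \ref{holder} (the source of equicontinuity for $v_\delta$ in Section \ref{sec:holder}) was established under $(H2)$, which is no longer in force here. The key point to exploit is that the Lipschitz regularity of $g$ permits a more elementary estimate, based on duplicating the $y$-variable and on the dissipative effect of the Ornstein-Uhlenbeck drift $\alpha y+b(y)$ with $\alpha>0$: this yields a uniform Lipschitz-in-$y$ bound on $v_\delta$ (stronger than the Hölder-with-logarithmic-weight bound of Theorem \ref{holder}) without any size condition linking $\alpha$ to $L_b$ and $L_\tau$. Once this estimate is in place, the remainder of the proof is purely algebraic via the separation of variables.
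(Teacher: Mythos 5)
Your proof is correct and follows essentially the same route as the paper's, whose entire proof is a one-line reference to the argument of Proposition~\ref{prop:nota} with the a priori estimate of Theorem~\ref{holder} replaced by \cite[Proposition~5.2]{Ghilli18}; your explicit derivation via $u_\delta=h(x)v_\delta$ and the linearity of $\mathcal{L}$ simply spells out what that reference supplies. The only caveat is that your assertion of a $\delta$-uniform Lipschitz bound on $v_\delta$ requiring \emph{no} size condition linking $\alpha$, $L_b$, $L_\tau$ is stated rather than derived (a na\"ive run of the Theorem~\ref{holder} machinery with $\gamma=1$ would still demand $\alpha>L_b+L_\tau^2(m+1)$), and you should cite \cite[Proposition~5.2]{Ghilli18} for that estimate exactly as the paper does.
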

\begin{proof}
The proof follows the same arguments of Proposition~\ref{prop:nota}, replacing Theorem~\ref{holder} with \cite[Proposition $5.2$]{Ghilli18}.
\end{proof}
The following estimates are analogous to those of Lemma \ref{estchiholder}. We omit the proof since it is analogous to that of Lemma \ref{estchiholder}, just replacing Theorem~\ref{holder} with \cite[Proposition $5.2$]{Ghilli18}.
\begin{lemma}\label{estchi}
Assume $(LF)$ and $(HF)$. Let $w_*$ be the solution to \eqref{eq:cellwstar}. Then
\begin{itemize}
\item[i)] there exists a $C_5>0$ such that 
$$
|w_*(y)|\leq C_5\left(1+\log(y^2+1)\right)\qquad \forall y \in \R^m;
$$
\item[ii)] Let $C_1$ and $C_5$ be the constants defined respectively in $(HF)$ and in point i). Let $\chi$ be the Lyapounov function of \eqref{lyapounov}. Then there holds
$$
|h(x)w_*(y)|-\eta \chi(y)\leq  C_1C_5\log C_1C_5- C_1C_5\log\eta +\eta \qquad \forall (x,y) \in \R^n \times \R^m.
$$
\end{itemize}
\end{lemma}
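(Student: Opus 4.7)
The plan is to follow the same approximation strategy as in the proof of Lemma~\ref{estchiholder}, with \cite[Proposition~5.2]{Ghilli18} playing the role of Theorem~\ref{holder}. Specifically, let $u_\delta^*$ denote the unique bounded solution to the approximating cell problem
\[
\delta u_\delta^*(y) + \mathcal{L}(y, D_y u_\delta^*, D^2_{yy} u_\delta^*) = g(y) \qquad \textrm{in } \R^m,
\]
which, by \cite[Proposition~5.2]{Ghilli18}, enjoys a uniform-in-$\delta$ Lipschitz bound on $u_\delta^* - u_\delta^*(0)$ (this is where the Lipschitz regularity of $g$ replaces the H\"older machinery behind Theorem~\ref{holder}). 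As $\delta\to 0$, $u_\delta^* - u_\delta^*(0)$ converges locally uniformly to $w_*$ and $\delta u_\delta^*(0) \to \lambda_1$, by the standard ergodic argument.

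For part~(i), the key step is a supersolution construction on the exterior of a large ball. Setting $g_0(y) := C_5(1+\log(1+|y|^2))$, a direct computation yields
\[
\mathcal{L}(y, Dg_0, D^2 g_0) = 2 C_5\, \frac{\alpha|y|^2 + b(y)\cdot y - \mathrm{tr}(\tau\tau^T)}{1+|y|^2} + \frac{4 C_5\,|\tau(y)^T y|^2}{(1+|y|^2)^2},
\]
which tends to $2\alpha C_5$ as $|y|\to\infty$. Hence, choosing $C_5$ large enough (depending on $\|g\|_\infty$, $\alpha$, $\tau$, $b$), one has $\mathcal{L}(y, Dg_0, D^2 g_0) \geq \|g\|_\infty + 1$ for $|y|\geq R_0$ and some $R_0>0$, so that $g_0$ is a supersolution to the approximating equation on the exterior domain uniformly in $\delta\in(0,1)$. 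The uniform Lipschitz estimate controls $|u_\delta^* - u_\delta^*(0)|$ by some $M_0$ on $\overline{B_{R_0}}$; after enlarging $C_5$ if necessary so that $g_0 \geq M_0$ on $\partial B_{R_0}$, the comparison principle applied on $\R^m\setminus\overline{B_{R_0}}$ gives $u_\delta^* - u_\delta^*(0) \leq g_0$ there, uniformly in $\delta$. A symmetric argument handles the lower bound, and passing to the limit $\delta\to 0$ produces the claimed bound for $w_*$.

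For part~(ii), combining (i) with $(HF)$ and the choice $\chi(y)=|y|^2$ gives
\[
|h(x) w_*(y)| - \eta\chi(y) \leq C_1 C_5\bigl(1+\log(1+|y|^2)\bigr) - \eta |y|^2.
\]
Maximizing the right-hand side in $t := |y|^2 \geq 0$ leads to the critical point $t^* = C_1C_5/\eta - 1$, and substituting produces exactly $C_1 C_5 \log(C_1 C_5) - C_1 C_5 \log\eta + \eta$, as required. The main obstacle is the supersolution construction in~(i), which must be performed by hand here because, in the absence of $(H2)$, one cannot invoke Theorem~\ref{holder} to obtain an \emph{a priori} growth estimate; once that is in place, the bound on $w_*$ follows by comparison and part~(ii) reduces to a one-variable calculus exercise.
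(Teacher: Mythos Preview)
Your proposal is correct and follows essentially the same approach as the paper: the paper simply states that the proof is analogous to that of Lemma~\ref{estchiholder}, replacing Theorem~\ref{holder} with \cite[Proposition~5.2]{Ghilli18}, and you have spelled out exactly those steps (supersolution $g_0$ on an exterior domain, uniform Lipschitz control of $u_\delta^*-u_\delta^*(0)$ on the bounded core via \cite[Proposition~5.2]{Ghilli18}, comparison, then the one-variable optimisation for part~(ii)). One small remark: the supersolution construction in (i) is not where assumption~$(H2)$ would have been needed---that computation is the same in Lemma~\ref{estchiholder} and here; $(H2)$ enters only through Theorem~\ref{holder}, which supplies the uniform regularity on the bounded ball, and it is precisely this ingredient that you (and the paper) replace by the Lipschitz estimate from \cite[Proposition~5.2]{Ghilli18}.
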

\begin{remark}
By the same arguments of the proof of Lemma~\ref{lemma:ex_u}, we obtain that the effective problem~\eqref{eq:effprob} admits exactly one bounded solution~$u$ which moreover fulfills: $\|u\|_\infty\leq C+C_1||g||_\infty$.
\end{remark}

\subsection{Lipschitz source term with separated variables}\label{sec:sepvar}
We can now establish an estimate of the rate of convergence when the cost is Lipschitz continuous and has separated variables.
\begin{theorem}\label{thm:ratehlip}
Assume $(LF)$ and $(HF)$. Let $u^\eps$ and ~$u$ be the solution to equation \eqref{eq:HJ-eps3} and respectively to~\eqref{eq:effprob}. For $\eps$ sufficiently small and for  every compact $\mathcal K \subset \R^m$ there exists a constant $K\geq 0$ such that 
\begin{equation*}
\left|u^\eps(x,y)-u(x)\right|\le K\left(\eps|\log\eps|\right)^{\frac{1}{2}}\qquad \forall (x,y)\in \R^n\times \mathcal{K}.
\end{equation*}
\end{theorem}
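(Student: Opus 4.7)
The plan is to mimic the proof of Theorem~\ref{thm:rategc} (more precisely, of Proposition~\ref{prop:ratehgc}) but taking advantage of the fact that, under $(LF)$--$(HF)$, the ergodic corrector factorizes as $w(y;x)=h(x)w_*(y)$ and $\lambda(x)=h(x)\lambda_1$. This bypasses the continuous dependence machinery (Proposition~\ref{prop:stimaccia}) whose proof relied on Theorem~\ref{holder}, and hence on $(H2)$; the separated structure gives the continuous dependence for free, with H\"older exponent $\beta=1$.

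First I would introduce, for $\rho\in(0,1)$, the approximated effective Hamiltonian $\overline H_\rho$ and its solution $u_\rho$ as in~\eqref{eq:effbarrhoH}--\eqref{eq:effprobrho}. The Lipschitz regularity of $\lambda(x)=h(x)\lambda_1$ (immediate from $(HF)$ and~\eqref{eq:lambda1spcases}) ensures, exactly as in Lemma~\ref{lem:approxrho}, that~$u_\rho$ is the unique bounded viscosity solution of~\eqref{eq:effprobrho} and that $u_\rho\to u$ locally uniformly as $\rho\to 0^+$. Thus it suffices to prove the one-sided estimate $u^\eps(x,y)-u_\rho(x)\leq K(\eps|\log\eps|)^{1/2}$ on $\R^n\times\mathcal K$, uniformly in $\rho$; the reverse estimate follows by the symmetric argument (with $\overline H^\rho=\max_{B_\rho}$ in place of $\overline H_\rho$).

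For the one-sided estimate I would repeat the doubling-variable scheme used in Proposition~\ref{prop:ratehgc}, starting from
\[
\psi(x,y)=u^\eps(x,y)-u_\rho(x)-\eps h(x)w_*(y)-\eps\eta\chi(y)-\sigma|x|^2,
\]
which attains its global maximum at some $(\bar x,\bar y)$ (here $\chi(y)=|y|^2$ is the Lyapounov function). I then freeze the $x$-dependence of the corrector by considering
\[
\tilde\psi(x,y)=u^\eps(x,y)-u_\rho(x)-\eps h(\bar x)w_*(y)-\eps\eta\chi(y)-\sigma|x|^2-c|x-\bar x|^2
\]
on the box $\mathcal B=\overline{B(\bar x,r)\times B(0,\tilde C/\sqrt{\eps\eta})}$. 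The crucial point is the boundary analysis on $\partial B(\bar x,r)\times B(0,\tilde C/\sqrt{\eps\eta})$: since
\[
\eps\bigl[h(x)-h(\bar x)\bigr]w_*(y)\leq \eps L_h|x-\bar x|\,C_5\bigl(1+\log(1+|y|^2)\bigr)\leq \bar C\,\eps\,r\,(1+|\log(\eps\eta)|)
\]
for $y$ in the box (by Lemma~\ref{estchi}(i) and $(HF)$), the condition $\tilde\psi\leq\psi$ on that face reduces to
\[
c\,r^2\geq \bar C\,\eps\,r\,(1+|\log(\eps\eta)|),\qquad\textrm{i.e.}\qquad c\,r\geq \bar C\,\eps\,(1+|\log(\eps\eta)|).
\]
Note the exponent on $r$ has dropped from $2-\beta$ (with $\beta\in(0,1]$) to~$1$, which is exactly why we recover the sharper rate $(\eps|\log\eps|)^{1/2}$ here. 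The other face ($y\in\partial B(0,\tilde C/\sqrt{\eps\eta})$) is handled exactly as in Proposition~\ref{prop:ratehgc} using Lemma~\ref{estchi}(i) to choose $\tilde C$ independently of $\eps,\eta,\sigma,\rho$.

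Once the maximum of $\tilde\psi$ is attained at an interior point $(\tilde x,\tilde y)\in\mathcal B$, I apply Ishii's lemma with the auxiliary penalization $\tau|x-\xi|^2$ to decouple the sub-/super-solution inequalities: using that $u^\eps$ is a subsolution of~\eqref{eq:HJ-eps3} and $u_\rho$ is a supersolution of~\eqref{eq:effprobrho}, and exploiting the linearity of $\mathcal L$ together with $(H)$ and the cell problem~\eqref{eq:cellwstar} (which yields $\mathcal L(\tilde y,D_yw(\tilde y;\bar x),D^2_{yy}w(\tilde y;\bar x))+h(\bar x)g(\tilde y)=h(\bar x)\lambda_1=\lambda(\bar x)$), I obtain the analogue of~\eqref{eqn:eqtest2}, where the $|x-\bar x|^\beta$ term is replaced by a Lipschitz-in-$x$ term $L_{h}\|g\|_\infty|\tilde x-\bar x|\leq L_{h}\|g\|_\infty r$, coming from $|f(\tilde x,\tilde y)-f(\bar x,\tilde y)|+|\lambda(\tilde x)-\lambda(\bar x)|$. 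Passing $\bar\eps\to 0$, $\tau\to\infty$, $\sigma\to 0$, using Lemma~\ref{estchi}(ii) and the Lyapounov bound~\eqref{lyapounov}, I arrive at
\[
u^\eps(x,y)-u_\rho(x)\leq \eps\,h(x)w_*(y)+\eps\eta\chi(y)+C\bigl[cr+c+r+\eps-\eps C_5\log\eta+\eps\eta\bigr].
\]
Choosing $\eta=\eps$ and optimizing the constraint $cr\geq \bar C\eps(1+|\log(\eps^2)|)$ with the symmetric choice $c=r$, so that $r=c=O\bigl((\eps|\log\eps|)^{1/2}\bigr)$, yields the claimed rate uniformly for $y\in\mathcal K$. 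The main technical delicacy, as in Proposition~\ref{prop:ratehgc}, is keeping track that the constants $\tilde C$ and all other constants remain independent of $\eps,\eta,\rho,\sigma$ while the box $\mathcal B$ expands as $\eps\to 0$.
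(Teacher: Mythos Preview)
Your proposal is correct and follows essentially the same approach as the paper's proof: both factorize the corrector as $w(y;x)=h(x)w_*(y)$, freeze $h$ at $\bar x$ in $\tilde\psi$, obtain the boundary constraint $cr\geq \bar C\eps(1+|\log(\eps\eta)|)$ in Case~a via the Lipschitz continuity of $h$ and Lemma~\ref{estchi}(i), control the error terms $[h(\bar x)-h(x_\tau)]\lambda_1+[h(x_\tau)-h(\bar x)]g(y_\tau)$ by $2L_h\|g\|_\infty r$, and then optimize with $c=r$ and $\eta=\eps$. The only point you leave implicit is the appeal to Lemma~\ref{lem:est} (for $\sigma|\bar x|\to 0$ and for placing $(\bar x,\bar y)$ inside $\mathcal B$), but this is routine.
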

As in subsection \ref{subsec:rateconv}, in order to prove Theorem~\ref{thm:ratehlip}, it is expedient to study the approximated effective problems~\eqref{eq:effprobrho}.  We first state the following Proposition whose proof is postponed after the proof of Theorem~\ref{thm:ratehlip}.
\begin{prop}\label{prop:ratehlip}
Assume $(LF)$ and $(HF)$. Let $u^\eps$ and $u_\rho$ be the solution to the equation \eqref{eq:HJ-eps3} and respectively to \eqref{eq:effprobrho}. Then, for $\eps$ sufficiently small and for every compact $\mathcal{K} \subset \R^m$, there exists $K\geq 0$ (independent of $\rho \in (0,1)$) such that:
$$
u^\eps(x,y)-u_\rho(x)\le K\left(\eps|\log\eps|\right)^{\frac{1}{2}}\qquad \forall (x,y)\in \R^n\times \mathcal{K}.
$$
\end{prop}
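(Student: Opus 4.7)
The plan is to follow the same doubling of variables strategy as in the proof of Proposition~\ref{prop:ratehgc}, but exploiting the separated-variable structure $f(x,y)=h(x)g(y)$ to replace the role played by Proposition~\ref{prop:stimaccia}-(v). In particular, the identity $w(y;x)=h(x)w_*(y)$ combined with the Lipschitz continuity of $h$ and the logarithmic growth of $w_*$ (Lemma~\ref{estchi}-(i)) directly furnishes
\[
|w(y;x)-w(y;\bar x)|=|h(x)-h(\bar x)|\,|w_*(y)|\le L_h\,C_5\,|x-\bar x|\,\bigl(1+\log(1+|y|^2)\bigr),
\]
which is precisely the Lipschitz analog ($\beta=1$) of Proposition~\ref{prop:stimaccia}-(v). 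This is the reason assumption $(H2)$ can be dropped.

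Concretely, I would introduce
\[
\psi(x,y)=u^\eps(x,y)-u_\rho(x)-\eps w(y;x)-\eps\eta\chi(y)-\sigma|x|^2,
\]
pick a maximum point $(\bar x,\bar y)$, and then freeze the $x$-variable in $w$ by defining
\[
\tilde\psi(x,y)=u^\eps(x,y)-u_\rho(x)-\eps w(y;\bar x)-\eps\eta\chi(y)-\sigma|x|^2-c|x-\bar x|^2.
\]
On $\partial B(\bar x,r)\times B(0,\tilde C/\sqrt{\eps\eta})$ the inequality $\tilde\psi\le\psi$ holds provided
\[
cr^2\ge \eps\bigl[w(y;x)-w(y;\bar x)\bigr],
\]
and by the displayed bound above this reduces to $cr\ge \bar C\eps(1+|\log(\eps\eta)|)$. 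On $B(\bar x,r)\times\partial B(0,\tilde C/\sqrt{\eps\eta})$ the bound $\tilde\psi\le\psi(\bar x,0)$ follows exactly as in Case b of Proposition~\ref{prop:ratehgc}, using Lemma~\ref{estchi}-(i) and the a~priori bound on $\|u^\eps\|_\infty,\|u_\rho\|_\infty$. One then fixes $\tilde C$ (independent of $\eps,\eta,\sigma,\rho$) so that $\tilde\psi$ attains an interior maximum at some $(\tilde x,\tilde y)\in\mathcal B$.

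The viscosity machinery is then applied exactly as in Proposition~\ref{prop:ratehgc}: I would double the $x$ variable via a penalty $\tau|x-\xi|^2$, apply the Crandall--Ishii lemma, use the subsolution inequality for $u^\eps$, the supersolution inequality for $u_\rho$ together with
\[
\overline H_\rho(\xi_\tau,2\tau(x_\tau-\xi_\tau),X_2)\le H(x_\tau,2\tau(x_\tau-\xi_\tau),X_2)+\lambda(x_\tau)
\]
valid for $\tau$ large (since $x_\tau,\xi_\tau\to\tilde x$), and the cell equation $\mathcal{L}(y,D_yw(y;\bar x),D^2_{yy}w(y;\bar x))+h(\bar x)g(y)=h(\bar x)\lambda_1$. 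The key algebraic gain of the separated structure is that
\[
f(x_\tau,y_\tau)-f(\bar x,y_\tau)=[h(x_\tau)-h(\bar x)]g(y_\tau),\qquad \lambda(\bar x)-\lambda(x_\tau)=[h(\bar x)-h(x_\tau)]\lambda_1,
\]
so both terms are bounded by $C|x_\tau-\bar x|$ via $(HF)$, with no H\"older loss and no need for $(H2)$.

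After sending $\bar\eps\to 0$, $\tau\to\infty$ and $\sigma\to 0$ (the latter handled via the $\sigma|\bar x|\to 0$ estimate of Lemma~\ref{lem:est}, exactly as in \eqref{limsigmabisbis}), and using the Lyapounov bound \eqref{eq:lyapproof} together with Lemma~\ref{estchi}-(ii) to control $-w(y;x)+\eta\chi(y)$, one arrives at
\[
u^\eps(x,y)-u_\rho(x)\le\eps\bigl[w(y;x)+\eps\chi(y)\bigr]+C\bigl[cr+c+r+\eps+\eps|\log\eps|\bigr],
\]
uniformly in $\rho\in(0,1)$. The choice $\eta=\eps$, $c=r$ with $r=[\bar C\eps(1+2|\log\eps|)]^{1/2}$ makes the compatibility condition $cr\ge\bar C\eps(1+|\log(\eps\eta)|)$ hold with equality, yielding the claimed bound $K(\eps|\log\eps|)^{1/2}$ on the compact $\mathcal K$.

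The main point to check carefully is the freezing step: one must verify that $\bar C$ in the estimate $|w(y;x)-w(y;\bar x)|\le \bar C|x-\bar x|(1+|\log(\eps\eta)|)$ for $y\in B(0,\tilde C/\sqrt{\eps\eta})$ is truly independent of the small parameters (this is immediate from the separated-variable formula and Lemma~\ref{estchi}-(i) once $\tilde C$ is fixed), and that $\tilde C$ itself can be chosen independent of $\eps,\eta,\sigma,\rho$ (which is done as in Case b of Proposition~\ref{prop:ratehgc}, since $\|u_\rho\|_\infty\le C+C_1\|g\|_\infty$ uniformly in $\rho$). With these verifications the argument goes through unchanged and produces the exponent $1/2$ instead of $\beta/2$, reflecting the Lipschitz assumption on $h$.
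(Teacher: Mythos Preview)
Your proposal is correct and follows essentially the same approach as the paper's own proof. The paper writes the corrector directly as $h(x)w_*(y)$ rather than $w(y;x)$, but since $w(y;x)=h(x)w_*(y)$ under $(LF)$ this is purely notational; the freezing step, the condition $cr\ge\bar C\eps(1+|\log(\eps\eta)|)$, the Crandall--Ishii argument, the bound $|h(\bar x)-h(x_\tau)|(|\lambda_1|+|g(y_\tau)|)\le 2L_h\|g\|_\infty|x_\tau-\bar x|$, and the final choice $c=r=[\bar C\eps(1+2|\log\eps|)]^{1/2}$ all coincide with what the paper does.
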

Now we first prove Theorem \ref{thm:ratehlip} and after Proposition \ref{prop:ratehlip}.
\begin{proof}[Proof of Theorem \ref{thm:ratehlip}]
The results of Lemma~\ref{lem:approxrho} still hold true. Hence, by Lemma \ref{lem:approxrho} and Proposition \ref{prop:ratehlip}, as $\rho \to 0^+$, we get
$$
u^\eps(x,y)-u(x)\le K\left(\eps|\log\eps|\right)^{\frac{1}{2}}\qquad \forall (x,y)\in \R^n\times \mathcal{K}.
$$
The other inequality is established in a similar manner as in the proof of Theorem~\ref{thm:rategc}.
\end{proof}

\begin{proof}[Proof of Proposition \ref{prop:ratehlip}]
The proof follows the same steps of the proof of Proposition \ref{prop:ratehgc}, with the main difference that in the place of $w(y;x)$ we have now $h(x)w_*(y)$, and that now we can rely on the Lipschitz regularity of $h$. We shall give some details for completeness of exposition.

We consider the function $\psi(x,y)$ defined in \eqref{eq:psiprop11} where in the place of $w(y;x)$ we have now $h(x) w_*(y)$. The function $\psi$ attains its global maximum at some point $(\bar x, \bar y)$ and the results of Lemma \ref{lem:est} (with $v=u_\rho$) hold.
We introduce the function $\tilde \psi$ defined in \eqref{eq:tildepsiprop11} where in the place of $w(y;x)$ we have now $h(x) w_*(y)$ and $c>0$ is to be fixed later. We proceed as in the proof of Proposition \ref{prop:ratehgc}.  We have \eqref{eq:comppsi2} and \textbf{Case a} and \textbf{Case b}. 

\textbf{Case a:} \quad $x \in \partial B(\bar x, r)$. Note that \eqref{eq:tildepsipsi} is equivalent to
\begin{equation*}
cr^2\geq \eps w_*(y)[h(x)-h(\bar x)], \quad \forall y \in B\left(0,\frac{\tilde C}{\sqrt{\eps\eta}}\right).
\end{equation*}
By using the Lipschitz regularity of $h$,  Lemma \ref{estchi} i),   that $ y \in B\left(0,\frac{\tilde C}{\sqrt{\eps\eta}}\right)$ and taking $\eps, \eta$ enough small, we get that it suffices to have
\begin{equation}\label{eqn:cr}
cr\geq \bar C\eps\left(1+ |\log(\eps \eta)|\right),
\end{equation}
where $\bar C>0$ depends on $L_h$ and $C_5$.
Therefore if \eqref{eqn:cr} holds, we have \eqref{eq:tildepsipsicasea}.

\textbf{Case b:} \quad $y  \in \partial B\left(0,\frac{\tilde C}{\sqrt{\eps\eta}}\right)$. To have \eqref{eq:tildepsipsizero}, it suffices to have
\begin{equation}\label{eq:tildeCpartcases}
\tilde C^2\geq 4M+\eps C_1 C_5\left(1+\log\left(1+\frac{\tilde C^2}{\eps \eta}\right)\right)+\sigma(2|\bar x|+1)
\end{equation}
and as in the proof of Proposition \ref{prop:ratehgc}, the previous inequality is always true  for $\tilde C$ large enough when $\sigma$ and $\eps$ tend to zero (and, as we will choose, $\eta=\eps$). 
Therefore by \eqref{eq:tildeCpartcases}, we have \eqref{eq:tildepsipsicaseb}.
In any case, we have \eqref{eq:tildepsipsitotal}. 
Hence the function $\tilde{\psi}$ attains a (local) maximum at some point $(\tilde{x}, \tilde{y})\in \mathcal{B}$.
Without loss of generality, we may assume that this maximum is strict and global. 

Again by replacing $w(y;x)$ with $h(x)w_*(y)$, for any $\tau \in (0,1)$, we introduce $\Psi$ as in~\eqref{Psi} 
and denote the maximum point of $\Psi$ by $(x_\tau, y_\tau, \xi_\tau)$ with $x_\tau, \xi_\tau \to \tilde x$ and $y_\tau \to \tilde y$ as $\tau \to \infty$.
As before, by \cite[Theorem $3.2$]{CIL92} 
for every $\bar \eps \in (0,1)$, there exist two matrices $X_1 \in \mathbb{M}^{n+m}, X_2 \in \mathbb{M}^n$ such that:
\eqref{I1} holds with $\eps D_y w(y_\tau;\bar x)$ replaced by $\eps h(\bar x) D_yw_*(y_\tau)$, \eqref{II1} holds and \eqref{III1e2} holds with $\eps D^2_{yy}w(y_\tau; \bar x)$ replaced by $\eps h(\bar x)D^2_{yy}w_*(y_\tau)$. 
Since $u^\eps$ is a subsolution of \eqref{eq:HJ-eps3}, we deduce
\begin{multline}\label{eqn:testeq}
0\geq u^\eps(x_\tau, y_\tau)+H\left(x_\tau, 2\sigma x_\tau +2c(x_\tau-\bar x)+2\tau(x_\tau-\xi_\tau), X_{xx}\right)\\+\frac{1}{\eps}\mathcal{L}\left(y_\tau, \eps h(\bar x) D_y w_*(y_\tau)+\eps \eta D_y \chi(y_\tau), X_{yy}\right)+h(x_\tau)g(y_\tau).
\end{multline}
By the same passages as in the proof of Proposition \ref{prop:ratehgc}, and using that \eqref{eq:cellwstar} implies
$$
h(\bar x)\mathcal{L}(y_\tau, D_y w_*(y_\tau), D^2_{yy}w_*(y_\tau))+h(\bar x)g(y_\tau)=h(\bar x)\lambda_1,
$$
we get
\begin{multline}\label{eq:C}
0\geq u^\eps(x_\tau, y_\tau)+H\left(x_\tau, 2\tau(x_\tau-\xi_\tau), X_2\right)+h(x_\tau)\lambda_1+\eta \mathcal{L}\left(y_\tau, D_y \chi(y_\tau), D^2_{yy} \chi(y_\tau)\right)+\\-C\left[\sigma |x_\tau|+c|x_\tau-\bar x|+\sigma +c+\bar \epsilon ||A^2||+\frac{\bar \eps}{\eps} ||A^2||\right]+\left[h(\bar x)-h(x_\tau)\right]\lambda_1+\left[h(x_\tau)-h(\bar x)\right]g(y_\tau),
\end{multline}
for some constant $C$ independent of $\tau, \sigma, \eps, \bar \eps, \rho$. From now on, with some abuse of notation, we  will use the same symbol~$C$ for updates of a constant $C$ independent of $\tau, \sigma, \eps, \bar \eps, \rho$.

Since $u_\rho$ is a supersolution to \eqref{eq:effprobrho} and $\xi_\tau \to \tilde x$ and $x_\tau \to \tilde x$ as $\tau \to 0^+$ and then, for $\tau$ sufficiently small, we have \eqref{eq:barHrhomaj} with $h(x_\tau)\lambda_1$ in the place of $\lambda(x_\tau)$, we deduce
\begin{multline*}
0\geq u^\eps(x_\tau, y_\tau)-u_\rho(\xi_\tau)+\eta \mathcal{L}\left(y_\tau, D_y \chi(y_\tau), D^2_{yy} \chi(y_\tau)\right)\\-C\left[\sigma |x_\tau|+c|x_\tau-\bar x|+\sigma +c+\bar \epsilon ||A^2||+\frac{\bar \eps}{\eps} ||A^2||\right]-\left|h(\bar x)-h(x_\tau)\right|\left(|\lambda_1|+|g(y_\tau)|\right).
\end{multline*}
The Lipschitz continuity of $h$, the boundedness of $g$ stated in $(LF)$ and $(HF)$ and \eqref{eq:lambda1spcases} yield
$$
|h(\bar x)-h(x_\tau)|\left(|\lambda_1|+|g(y_\tau)|\right)\leq  2L_h||g||_\infty|\bar x-x_\tau|.
$$
By the last two inequalities, we get
\begin{multline*}
0\geq u^\eps(x_\tau, y_\tau)-u_\rho(\xi_\tau)+\eta \mathcal{L}\left(y_\tau, D_y \chi(y_\tau), D^2_{yy} \chi(y_\tau)\right)\\-C\left[\sigma (|x_\tau|+1)+(c+1)|x_\tau-\bar x| +c+\bar \epsilon ||A^2||+\frac{\bar \eps}{\eps} ||A^2||\right].
\end{multline*}

The rest of the proof is very similar to  the proof of Proposition \ref{prop:ratehgc}. We pass to the limit in the previous inequality first as $\bar \eps \to 0^+$, then as $\tau \to 0^+$ (recalling  that $C$ does not depend on $\bar \eps$ and $\tau$), we use $x_\tau \to \tilde x$, $y_\tau \to \tilde y$ and $\xi_\tau\to \tilde x$ ad $\tau \to + \infty$,  $|\tilde x -\bar x|\leq r$ and \eqref{eq:lyapproof}. The estimate obtained is used in \eqref{eqn:eqtest3bis} with Lemma \ref{estchi} ii) and  letting $\sigma \to 0$ and choosing $\eta=\eps$, we obtain
\begin{equation*}
u^\eps(x,y)-u_\rho(x)\leq\eps\left[h(x)w_*(y)+\eps\chi(y)\right]+C\left[cr+c+r+\eps+\eps|\log\eps|+\eps^2\right].
\end{equation*}
Recalling \eqref{eqn:cr}, we take
$$
c=r=\left[\bar C \eps(1+2|\log\eps|)\right]^{\frac{1}{2}}
$$
and we get
\begin{eqnarray*}
u^\eps(x,y)-u_\rho(x)&\leq& \eps\left(h(x)w_*(y)+\eps\chi(y)\right)+C\left[\eps(1+|\log\eps|)\right]^{\frac{1}{2}}\\ &\leq& \eps\left(C_1\max_{\mathcal{K}}|w_*|+\eps \max_{\mathcal{K}}|\chi|\right)+C\left[\eps(1+|\log\eps|)\right]^{\frac{1}{2}}
\end{eqnarray*}
 and we conclude the statement for $K$ depending on $C, C_1, \max_{\mathcal{K}}|w_*|, \max_{\mathcal{K}}|\chi|$.
\end{proof}

\subsection{Smooth source term with separated variables}
In this subsection, besides assumption $(LF)$, we replace condition~$(HF)$ with the stronger condition
\begin{itemize}
\item[$(HC)$] $h \in C^2(\R^n)$ with $\|h\|_{C^2}:=\|h\|_\infty+\|D_xh\|_\infty+\|D^2_{xx}h\|_\infty \leq C_1$.
\end{itemize}
Under these hypotheses, in the following Theorem we obtain a better rate of convergence. The proof is similar and in part simpler than that of Theorem~\ref{thm:ratehlip}; we only write the proof of Proposition~\ref{prop:rchc2} which is the counterpart of Proposition~\ref{prop:ratehgc} in this case.
\begin{theorem}\label{thm:ratehreg}
Assume $(LF)$ and $(HC)$. Let $u^\eps$ and~$u$ be the solution to \eqref{eq:HJ-eps3} and respectively to~\eqref{eq:effprob}. Then, for $\eps$ sufficiently small and for every compact $\mathcal{K} \subset \R^m$,  there exists a constant $K$ such that 
$$
\left|u^\eps(x,y)-u(x)\right|\leq K\eps |\log(\eps)|\qquad \forall (x,y)\in \R^n\times \mathcal{K}.
$$
\end{theorem}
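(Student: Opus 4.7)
The strategy is to mimic the proof of Proposition~\ref{prop:ratehlip}, but to exploit the $C^2$ regularity of $h$ in order to \emph{avoid the device of freezing $h$ at $\bar x$} (i.e. the auxiliary function $\tilde\psi$ built from $h(\bar x)w_*(y)$). Since under $(HC)$ the corrector $w(y;x)=h(x)w_*(y)$ is a genuine $C^2$ function of $x$, it can be used directly as a test function. This simplification eliminates the terms forcing the parameters $c,r\sim(\eps|\log\eps|)^{1/2}$ in the Lipschitz case, and allows the rate $\eps|\log\eps|$.

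More precisely, I would set
\[
\psi(x,y) = u^\eps(x,y) - u_\rho(x) - \eps h(x) w_*(y) - \eps\eta\chi(y) - \sigma|x|^2,
\]
note that Lemma~\ref{lem:est} applies and yields a maximum point $(\bar x,\bar y)$ with $\sigma|\bar x|\to 0$ as $\sigma\to 0$ and $|\bar y|\leq \tilde C/\sqrt{\eps\eta}$ for $\tilde C$ independent of the small parameters, and then immediately perform a single doubling in the slow variable,
\[
\Psi(x,y,\xi) = u^\eps(x,y) - u_\rho(\xi) - \eps h(x) w_*(y) - \eps\eta\chi(y) - \sigma|x|^2 - \tau|x-\xi|^2,
\]
applying \cite[Theorem~3.2]{CIL92} at its maximum $(x_\tau,y_\tau,\xi_\tau)$ to extract a subjet of $u^\eps$ and a superjet of $u_\rho$. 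Crucially, the gradient in $y$ coming out of the subjet is $\eps h(x_\tau)D_yw_*(y_\tau)+\eps\eta D_y\chi(y_\tau)$ (with $h$ evaluated at $x_\tau$, not at $\bar x$), so by linearity of $\mathcal{L}$ and the cell problem \eqref{eq:cellwstar}, the combination
\[
h(x_\tau)\bigl[\mathcal{L}(y_\tau,D_yw_*(y_\tau),D^2_{yy}w_*(y_\tau)) + g(y_\tau)\bigr] = h(x_\tau)\lambda_1 = \lambda(x_\tau)
\]
appears \emph{exactly}, without any residual term $|\lambda(\bar x)-\lambda(x_\tau)|$ or $|f(\bar x,y_\tau)-f(x_\tau,y_\tau)|$.

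After combining with the supersolution inequality for $u_\rho$ via the approximated Hamiltonian $\overline H_\rho$ (exactly as in the proof of Proposition~\ref{prop:ratehgc}), the error terms that remain are: $(i)$ the $\sigma$-terms, which vanish as $\sigma\to 0$ by \eqref{limsigmabis}; $(ii)$ the $\bar\eps$-terms, negligible when $\bar\eps\to 0$; $(iii)$ the contribution of $D_x$ and $D^2_{xx}$ of the test function $\eps h(x)w_*(y)$, which by assumption~$(HC)$ and Lemma~\ref{estchi}(i) are bounded at $(x_\tau,y_\tau)$ by
\[
\eps\|Dh\|_\infty|w_*(y_\tau)| + \eps\|D^2h\|_\infty|w_*(y_\tau)| \leq C\eps\bigl(1+\log(1+\tilde C^2/(\eps\eta))\bigr) \leq C\eps|\log\eps|,
\]
once we choose $\eta=\eps$. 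Using the Lyapunov property \eqref{lyapounov} for the term $-\eta\mathcal{L}(y_\tau,D_y\chi,D^2_{yy}\chi)$ and Lemma~\ref{estchi}(ii) to control $\eps h(x)w_*(y)-\eps\eta\chi(y)$ at any $(x,y)$, one arrives at
\[
u^\eps(x,y)-u_\rho(x) \leq \eps\bigl(h(x)w_*(y)+\eps\chi(y)\bigr) + K\eps|\log\eps|,
\]
and restricting to $(x,y)\in\R^n\times\mathcal{K}$ and letting $\rho\to 0^+$ (using Lemma~\ref{lem:approxrho}) yields the upper bound. The reverse inequality is obtained by the symmetric argument with $\overline H^\rho$ and the function $\psi$ replaced by its natural analogue with signs reversed, exactly as in the proof of Theorem~\ref{thm:rategc}.

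The main technical obstacle I anticipate is the bookkeeping of the matrix inequality from \cite[Theorem~3.2]{CIL92}: the Hessian $A^2$ of the test function $\phi(x,y,\xi)=\eps h(x)w_*(y)+\eps\eta\chi(y)+\sigma|x|^2+\tau|x-\xi|^2$ produces cross terms that mix $x$ and $y$ (through $\eps D^2_{xy}(h(x)w_*(y))$), and one must verify that these only contribute an extra $\bar\eps/\eps$ factor times a quantity that is bounded independently of $\eps$ (after applying $\mathcal{L}(y_\tau,0,I)$, which is bounded under $(L)$), so that they vanish when $\bar\eps\to 0$. Everything else is a streamlined version of Proposition~\ref{prop:ratehlip}.
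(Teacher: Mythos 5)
Your proposal is correct and follows essentially the same route as the paper's proof of Proposition~\ref{prop:rchc2}: you drop the localization $\tilde\psi$ and the penalty $c|x-\bar x|^2$, put $\eps h(x)w_*(y)$ directly into the test function, and control the new terms $\eps D_xh\cdot w_*$ and $\eps D^2_{xx}h\cdot w_*$ via Lemma~\ref{estchi}(i) and the bound $|\bar y|\lesssim 1/\eps$ from Lemma~\ref{lem:est}(ii) to get the $\eps|\log\eps|$ rate. (One minor slip: \cite[Theorem~3.2]{CIL92} at a maximum produces a \emph{super}jet of $u^\eps$ and a \emph{sub}jet of $u_\rho$, not the other way round; this does not affect the argument.)
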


\begin{prop}\label{prop:rchc2}
Assume $(LF)$ and $(HC)$. Let $u^\eps$ and~$u_\rho$ be the solution to~\eqref{eq:HJ-eps3} and respectively to~\eqref{eq:effprobrho}. For $\eps$ sufficiently small and for every compact $\mathcal{K} \subset \R^m$,  there exists a constant $K$ such that 
$$
u^\eps(x,y)-u_\rho(x)\leq K\eps|\log(\eps)|\qquad \forall (x,y)\in \R^n\times \mathcal{K}.
$$
\end{prop}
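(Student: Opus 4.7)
The plan is to follow the overall structure of the proof of Proposition~\ref{prop:ratehlip}, but to exploit the stronger $C^2$-regularity of $h$ in order to remove the intermediate $\tilde\psi$ step and the penalty $c|x-\bar x|^2$ which was responsible for the sub-optimal exponent $1/2$ there. The key observation is that when $h\in C^2(\R^n)$, the function $(x,y)\mapsto h(x)w_*(y)$ is itself smooth enough in~$x$ to be plugged directly into the Crandall--Ishii doubling argument; one no longer needs to freeze $h$ at $\bar x$. Concretely, I would work with
\[
\psi(x,y)=u^\eps(x,y)-u_\rho(x)-\eps h(x)w_*(y)-\eps\eta\chi(y)-\sigma|x|^2,
\]
check that $\psi$ attains a global maximum at some $(\bar x,\bar y)$ with $\bar y\in B(0,\tilde C/\sqrt{\eps\eta})$ by the same kind of boundary comparison as in Proposition~\ref{prop:ratehlip} (using Lemma~\ref{estchi}~(i),(ii)), and then double the $x$-variable,
\[
\Psi(x,y,\xi)=u^\eps(x,y)-u_\rho(\xi)-\eps h(x)w_*(y)-\eps\eta\chi(y)-\sigma|x|^2-\tau|x-\xi|^2,
\]
obtaining a maximum point $(x_\tau,y_\tau,\xi_\tau)\to(\bar x,\bar y,\bar x)$ as $\tau\to\infty$.

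Applying \cite[Theorem~3.2]{CIL92} to the smooth test function $\phi=\eps h(x)w_*(y)+\eps\eta\chi(y)+\sigma|x|^2+\tau|x-\xi|^2$ produces superjets of $u^\eps$ and subjets of $u_\rho$ with
\[
p_x=\eps Dh(x_\tau)w_*(y_\tau)+2\sigma x_\tau+2\tau(x_\tau-\xi_\tau),\qquad p_y=\eps h(x_\tau)D_yw_*(y_\tau)+\eps\eta D_y\chi(y_\tau),
\]
and, testing the matrix inequality with $(v,0,v)$ and $(0,w,0)$ as in the derivation of~\eqref{III1e2},
\[
X_{xx}-X_2\leq 2\sigma I+\eps w_*(y_\tau)D^2h(x_\tau)+\bar\eps\|A^2\|I,\qquad X_{yy}\leq\eps h(x_\tau)D^2_{yy}w_*(y_\tau)+\eps\eta D^2_{yy}\chi(y_\tau)+\bar\eps\|A^2\|I.
\]
Substituting into the subsolution inequality for $u^\eps$, the linearity of $\mathcal L$ combined with the cell equation~\eqref{eq:cellwstar} multiplied by $h(x_\tau)$ produces the cancellation of $h(x_\tau)g(y_\tau)$ and the appearance of $\lambda(x_\tau)=h(x_\tau)\lambda_1$; using the supersolution property of $u_\rho$ together with the bound $\overline H_\rho(\xi_\tau,p,X)\leq H(x_\tau,p,X)+\lambda(x_\tau)$, valid for $\tau$ large, and the Lipschitz regularity of $H$ in $(p,X)$ from assumption~$(H)$, one then obtains
\[
0\geq u^\eps(x_\tau,y_\tau)-u_\rho(\xi_\tau)+\eta\mathcal L\bigl(y_\tau,D_y\chi(y_\tau),D^2_{yy}\chi(y_\tau)\bigr)-C\bigl[\eps(1+|\log(\eps\eta)|)+\sigma(|x_\tau|+1)+\bar\eps\|A^2\|(1+\eps^{-1})\bigr],
\]
where the term $\eps(1+|\log(\eps\eta)|)$ collects the contributions of both $\eps Dh(x_\tau)w_*(y_\tau)$ and $\eps w_*(y_\tau)D^2h(x_\tau)$, each bounded by Lemma~\ref{estchi}~(i) and $|y_\tau|\leq\tilde C/\sqrt{\eps\eta}$.

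Passing to the limits $\bar\eps\to 0$, $\tau\to\infty$ and $\sigma\to 0$ (the last step using an analogue of~\eqref{limsigmabis}), the Lyapounov property~\eqref{eq:lyapproof} absorbs the remaining $\eta\mathcal L$ contribution; then, starting from $\psi(x,y)\leq\psi(\bar x,\bar y)$, controlling $-\eps h(\bar x)w_*(\bar y)-\eps\eta\chi(\bar y)$ through Lemma~\ref{estchi}~(ii) and choosing $\eta=\eps$, one arrives at
\[
u^\eps(x,y)-u_\rho(x)\leq\eps h(x)w_*(y)+\eps^2\chi(y)+K\eps(1+|\log\eps|)\qquad\forall(x,y)\in\R^n\times\R^m,
\]
and restricting $y$ to the compact $\mathcal K$ yields the claim. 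The main technical point is precisely to verify that the two new logarithmic contributions $|\eps Dh(x_\tau)w_*(y_\tau)|$ and $|\eps w_*(y_\tau)D^2h(x_\tau)|$, both of size $O(\eps|\log\eps|)$ on the localization ball, are absorbed by the Lipschitz dependence of $H$ in $(p,X)$; these are the same terms that used to be hidden behind the $c|x-\bar x|^2$ penalty in Proposition~\ref{prop:ratehlip}, and the $C^2$ hypothesis $(HC)$ is exactly what makes them enter the final bound at level $\eps|\log\eps|$ rather than $(\eps|\log\eps|)^{1/2}$, which is the whole content of the improvement.
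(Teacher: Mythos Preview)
Your proposal is correct and follows essentially the same approach as the paper: drop the localization $\tilde\psi$ and the penalty $c|x-\bar x|^2$, double directly with $\Psi(x,y,\xi)=u^\eps(x,y)-u_\rho(\xi)-\eps h(x)w_*(y)-\eps\eta\chi(y)-\sigma|x|^2-\tau|x-\xi|^2$, and absorb the new contributions $\eps D_xh(x_\tau)w_*(y_\tau)$ and $\eps D^2_{xx}h(x_\tau)w_*(y_\tau)$ via $(HC)$ and the logarithmic bound on $w_*$ from Lemma~\ref{estchi}~(i). The only cosmetic difference is that the paper obtains the bound on $|\bar y|$ directly from Lemma~\ref{lem:est}~(ii) (rather than via a boundary comparison) and defers the estimate of $\eps|w_*(\bar y)|$ until after the limits $\bar\eps\to0$, $\tau\to\infty$, $\sigma\to0$; the outcome is the same $K\eps|\log\eps|$ bound.
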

\begin{proof}
The function $\psi$ defined in the proof of Proposition \ref{prop:ratehlip} has a maximum point at $(\bar x, \bar y)$. The proof follows the same steps as the proof of Proposition \ref{prop:ratehlip}, with the main difference that now, since $h$ is smooth in $x$, we do not need to localize $\psi$ around $\bar x$ (hence there is no need to define the function $\tilde \psi$) and the term $c|x-\bar x|^2$ is no more present. Now, for $\tau \in (0,1)$, we introduce the function 
$$
\Psi(x, y, \xi)= u^\eps(x,y)-u_\rho(\xi)-\left[\eps h(x)w_*(y)+\eps\eta\chi(y)+\sigma|x|^2+\tau|x-\xi|^2\right]
$$
which has a maximum point in~$(x_\tau, y_\tau, \xi_\tau)$ with $x_\tau, \xi_\tau \to \bar x$ and $y_\tau \to \bar y$ as $\tau \to \infty$.
We proceed as in the proof of Proposition \ref{prop:ratehlip}. By \cite[Theorem $3.2$]{CIL92} we get that, for every $\bar \eps \in (0,1)$, there exist two matrices $X_1 \in \mathbb{M}^{n+m}, X_2 \in \mathbb{M}^n$ such that relation~\eqref{II1} holds while relations~\eqref{I1} and \eqref{III1e2} become respectively
\begin{equation*}
\begin{array}{l}
\left(\left(2\sigma x_\tau+\eps D_xh(x_\tau)w_*(y_\tau)+2\tau(x_\tau-\xi_\tau), \eps h(x_\tau) D_y w_*(y_\tau)+\eps \eta D_y \chi(y_\tau)\right), X_1\right) \in  J^+_{(x_\tau, y_\tau)} u^\eps\\
X_{xx}-X_2\leq \eps D^2_{xx}h(x_\tau) w_*(y_\tau)+2\sigma I+\bar \eps ||A^2||I\\
X_{yy}\leq \eps \eta D^2_{yy}\chi(y_\tau)+\eps h(x_\tau)D^2_{yy}w_*(y_\tau) + \bar \eps ||A^2||I.
\end{array}
\end{equation*}
Since $u^\eps$ is a subsolution of equation \eqref{eq:HJ-eps3}, we infer
\begin{multline*}
0\geq u^\eps(x_\tau, y_\tau)+H\left(x_\tau, 2\sigma x_\tau +\eps D_xh(x_\tau)w_*(y_\tau)+2\tau(x_\tau-\xi_\tau), X_{xx}\right)\\+\frac{1}{\eps}\mathcal{L}\left(y_\tau, \eps h(x_\tau) D_y w_*(y_\tau)+\eps \eta D_y \chi(y_\tau), X_{yy}\right)+h(x_\tau)g(y_\tau).
\end{multline*}
Similarly to the proof of Proposition \ref{prop:ratehgc} and since \eqref{eq:cellwstar} implies
$$
h(x_\tau)\mathcal{L}\left(y_\tau, D_y w_*(y_\tau), D^2_{yy}w_*(y_\tau)\right)+h(x_\tau)g(y_\tau)=h(x_\tau)\lambda_1,
$$
by using $(HC)$, we get
\begin{multline*}
0\geq u^\eps(x_\tau, y_\tau)+H\left(x_\tau, 2\tau(x_\tau-\xi_\tau), X_2\right)+h(x_\tau)\lambda_1+\eta \mathcal{L}\left(y_\tau, D_y \chi(y_\tau), D^2_{yy} \chi(y_\tau)\right)\\-CC_1\eps|w_*(y_\tau)|-C\left[\sigma |x_\tau|+\sigma +\bar \epsilon ||A^2||+\frac{\bar \eps}{\eps} ||A^2||\right],
\end{multline*}
for some constant $C$ independent of $\tau, \sigma, \eps, \bar \eps, \rho$. In the following, by some abuse of notation, we will use the same symbol $C$ for different constants independent of $\tau, \sigma, \eps, \bar \eps, \rho$.

The rest of the proof is the same as  in the proof of Proposition \ref{prop:ratehlip}. We pass to the limit first as $\bar \eps \to 0^+$, then as $\tau \to 0^+$ (recall that $C$ is independent of $\bar \eps$ and $\tau$), we use  $x_\tau \to \bar x$, $y_\tau \to \bar y$ and $\xi_\tau\to \bar x$ as $\tau \to\infty$. Recall that $(\bar x, \bar y)$ is a global maximum point for $\Psi(x,y)$. For $\eta=\eps$, using \eqref{eq:lyapproof}, taking the limit $\sigma \to 0$, (and again  $\sigma |\bar x|\to 0$) and using Lemma \ref{estchi} ii), we get
$$
u^\eps(x,y)-u_\rho(x)\leq \eps(h(x)w_*(y)+\eps \chi(y))+\eps \left(C_1C_5\left(\log C_1C_5-\log \eps\right)+\eps\right)+CC_1\eps|w_*(\bar y)|-\eps \tilde{L}.
$$
Moreover, since for $\eps$ enough small,  by Lemma \ref{lem:est} ii) (with $v=u_\rho$), $\bar y$ belongs to some ball (whose radius depends only on $\eps$ and $\eta$, but not on $\sigma$ and $\rho$), we may assume that, as $\sigma\to0^+$, $\bar y$ converge to some $\tilde{y}$ with
\begin{equation}\label{9bis}
|\tilde y|\leq \frac{\sqrt{13M}}{\eps}.
\end{equation}
Therefore, by Lemma \ref{estchi} i) and \eqref{9bis}, we have
$$
|w_*(\tilde y)|\leq C_5\left(1+\log(1+|\tilde y|^2)\right)\leq C_5\left(1+\log\left(1+\frac{13M}{\eps^2}\right)\right)\leq C-2\log\eps.
$$
By the last two inequalities we have for every compact $\mathcal{K} \subset \R^m$
\begin{eqnarray*}
u^\eps(x,y)-u_\rho(x)&\leq& \eps(h(x)w_*(y)+\eps\chi(y))+\eps \left(C_1C_5\left(\log C_1C_5+|\log \eps|\right)+\eps\right)+C C_1\eps\left(C+2|\log\eps|\right)
\\&\leq& \eps \left(C_1\max_{\mathcal{K}}|w_*|+\eps \max_{\mathcal{K}}|\chi|\right)+CC_1\eps (1+|\log \eps|),
\end{eqnarray*}
 and we conclude the statement for $K$ depending on $C, C_1, \max_{\mathcal{K}}|w_*|, \max_{\mathcal{K}}|\chi|$.
\end{proof}

\begin{appendices}
\section{Proof of Theorem \ref{holder}}
\begin{proof}
(i). This claim follows by standard theory (see \cite[Theorem II.1]{AL98}; see  also \cite[Lemma $3.5$]{MMT18} for a similar argument). However, for the sake of completeness, we provide the main features. We note that $u^\pm:= \pm \|F\|_\infty/\delta$ are respectively a super- and a subsolution to problem~\eqref{delta_ergo}. Hence, applying Perron's method, we infer the existence of a solution $u_\delta$ with $u^-\leq u_\delta\leq u^+$. Note that since $\|F\|_\infty\leq K_F$ we have that $\|u_\delta\|_\infty \leq \frac{K_F}{\delta}$, namely for each $\delta \in (0,1)$ the solution $u^\delta$ is bounded.

(ii). Adapting the arguments of \cite[Theorem $4.3$]{FIL}, we introduce:
$$
w_\delta(x,y):=u_\delta(x)-u_\delta(y)
$$
$$
g(x,y):=K_1\left[d+\left(d^2+|x-y|^\gamma\right)\left(\log(1+|x|^2)+\log(1+|y|^2)+K_2\right)\right].
$$
We want to prove the following statement. 
There exist $K_1, K_2>0$, both independent of $\delta$, such that for all $\delta \in (0,1)$, for $d$ sufficiently small (depending on $\delta$), there holds 
$$
w_\delta(x,y) \leq g(x,y) \quad \forall (x,y) \in \R^m\times \R^m.
$$
Assume for the moment that the claim is true. Hence for any $\delta>0$, for $d$ sufficiently small, we have
$$
w_\delta(x,y)\leq K_1\left[d+\left(d^2+|x-y|^\gamma\right)\left(\log(1+|x|^2)+\log(1+|y|^2)+K_2\right)\right]
$$
and letting $d \to 0^+$ we obtain
$$
u_\delta(x)-u_\delta(y)\leq K_1 |x-y|^\gamma\left(\log(1+|x|^2)+\log(1+|y|^2)+K_2\right)
$$
which is equivalent to the statement by arbitrariness of $x$ and $y$.\\
It remains to prove the claim. We argue by contradiction assuming
$$
\sup_{\R^m \times \R^m}(w_\delta-g)>0.
$$
Since $u_\delta$ is bounded and $\lim_{(x,y)\to \infty} g=\infty$, we
have $w_\delta \leq g$ in $\R^m \times \R^m \setminus B_R$ for a suitable ball $B_R \subset \R^m \times \R^m$. In particular $w_\delta-g$ admits a maximum point that we denote by $(\hat x, \hat y)$. We have
\[
w_\delta(\hat x, \hat y)-g(\hat x, \hat y)>0\qquad\textrm{and}\qquad \hat x \neq \hat y.
\]
We introduce the matrix $\Sigma(x,y)$ and the operator $\Xi$:
$$
\Sigma(x,y)=\begin{pmatrix} \tau(x) \tau(x)^T &\tau(x)\tau(y)^T\\
\tau(y)\tau(x)^T &\tau(y)\tau(y)^T
 \end{pmatrix}, \quad 
\Xi v(x,y)=\mbox{tr}\left[\Sigma(x,y)D_{xy}^2v\right],
$$
where $D_{xy}^2 v$ denotes the Hessian of $v$ with respect both to $x$ and $y$. 
Note that $\Sigma$ is semidefinite positive for all $(x,y) \in \R^m \times \R^m$.

\textbf{Claim 1}:  there holds
\begin{multline*}
\delta w_\delta(\hat x, \hat y)-\Xi w_\delta(\hat x, \hat y) +\alpha \hat x \cdot D_xw_\delta(\hat x, \hat y) +\alpha \hat y \cdot D_y w_\delta(\hat x, \hat y)+b(\hat x)\cdot D_x w_\delta(\hat x, \hat y) +b(\hat y)\cdot D_y w_\delta(\hat x, \hat y)\\ \leq C_F|\hat x -\hat y|^\gamma \left[\log\left(1+|\hat x|^2\right)+\log\left(1+|\hat y|^2\right)+1\right].
\end{multline*}
Indeed we observe that
\begin{eqnarray*}
\Xi w_\delta=\mbox{tr}\left(\tau(x)\tau(x)^TD^2_{xx}u_\delta(x)\right)-\mbox{tr}\left(\tau(y)\tau(y)^TD^2_{yy}u_\delta(y)\right).
\end{eqnarray*} 
Then we deduce
\begin{multline*}
\delta w_\delta(\hat x, \hat y)-\Xi w_\delta(\hat x, \hat y)+(\alpha \hat x +b(\hat x))\cdot D_xw_\delta(\hat x, \hat y)+(\alpha \hat y +b(\hat y))\cdot D_y w_\delta(\hat x, \hat y)\\ =F(\hat x)-F(\hat y)\leq C_F|\hat x-\hat y|^\gamma\left[\log\left(1+|\hat x|^2\right)+\log\left(1+|\hat y|^2\right)+1\right],
\end{multline*}
where the last inequality is due to $(F2)$.

\textbf{Claim 2}: there holds
\begin{multline*}
\delta g(\hat x,\hat y)-\Xi g(\hat x, \hat y)+\left(\alpha \hat x+b(\hat x)\right)D_x g(\hat x, \hat y)+\left(\alpha \hat y+b(\hat y)\right)D_y g(\hat x, \hat y)\\ \leq C_F|\hat x-\hat y|^\gamma \left[\log\left(1+|\hat x|^2\right)+\log\left(1+|\hat y|^2\right)+1\right].
\end{multline*}
Indeed it suffices to recall that $(\hat x, \hat y)$ is a maximum point for $w_\delta -g$, that \textbf{Claim 1} holds and to apply the maximum principle (using that $\Sigma$ is semidefinite positive).

\textbf{Claim 3}: there exist $K_1$ and $K_2$ such that: for any $\delta \in (0,1)$, there exists $d_0>0$ (dependent on $\delta$) such that the function $g$ verifies
\begin{multline*}
\delta g(\hat x, \hat y)-\Xi g(\hat x, \hat y)+(\alpha \hat x+b(\hat x))D_x g(\hat x, \hat y)+(\alpha \hat y+b(\hat y))D_y g(\hat x, \hat y)\\>C_F |\hat x-\hat y|^\gamma\left[\log\left(1+|\hat x|^2\right)+\log\left(1+|\hat y|^2\right)+1\right] \quad \forall d \in (0,d_0).
\end{multline*}
Note that \textbf{Claim 3} contradicts \textbf{Claim 2} so accomplishes the proof. Hence, we are left with proving \textbf{Claim 3}. In order to do this, we explicitly calculate the lefthand side of \textbf{Claim~2}. We need the following calculations.
For functions $T\,:\, \R^m\to \R,\, \psi:\R^m\to \R$, we have
$$
\Xi \left[T(x-y)\left(\psi(x)+\psi(y)\right)\right]=\mbox{tr}(C_1+C_2+C_3+C_4),
$$
where
\begin{eqnarray*}
C_1&=&\left[\tau(x)-\tau(y)\right]\left[\tau(x)-\tau(y)\right]^TD^2T(x-y)(\psi(x)+\psi(y))\\
C_2&=&\tau(x)\left[\tau(x)-\tau(y)\right]^T DT \otimes D\psi(x)+\left[\tau(x)-\tau(y)\right]\tau(x)^TDT \otimes D\psi(x)\\&&+\left[\tau(x)-\tau(y)\right]\tau(y)^TDT \otimes D\psi(y)+\tau(y)\left[\tau(x)-\tau(y)\right]^TDT \otimes D \psi(y)\\
C_3&=&\tau(x)\tau(x)^TD^2\psi(x), \quad C_4=\tau(y)\tau(y)^TD^2\psi(y).
\end{eqnarray*}
We choose
$$
T(x-y)=K_1(d^2+|x-y|^\gamma); \quad \psi(z)=\log\left(1+|z|^2\right)+\frac{K_2}{2}.
$$
Note that $g(x,y)=T(x-y)(\psi(x)+\psi(y))+K_1 d$.
Then we deduce
$$
-\Xi g=-\mbox{tr}(C_1)-\mbox{tr}(C_2)-\mbox{tr}(C_3)-\mbox{tr}(C_4).
$$
From now on, $C$ is a constant independent of $K_1, K_2, d, \delta$ (and may depends on $m, \tau$ and $b$), which may change from line to line. We have
\begin{eqnarray*}
-\mbox{tr}(C_1)&\geq& -L_\tau^2|x-y|^2\left[K_1 \gamma(2-\gamma)|x-y|^{\gamma-2}+m K_1|x-y|^{\gamma-2}\right](\psi(x)+\psi(y))\\ &\geq &-L_\tau^2K_1\gamma (m+2-\gamma)|x-y|^\gamma\left[\log\left(1+|x|^2\right)+\log\left(1+|y|^2\right)+K_2\right]\\
-\mbox{tr}(C_2)&\geq& -||\tau||_\infty L_\tau |x-y|K_1\gamma |x-y|^{\gamma-1}C\left[\frac{|x|}{1+|x|^2}+\frac{|y|}{1+|y|^2}\right]\\
-\mbox{tr}(C_3)&\geq& -||\tau||_\infty K_1\left(d^2+|x-y|^\gamma\right)\frac{C}{1+|x|^2}\\
-\mbox{tr}(C_4)&\geq& -||\tau||_\infty K_1\left(d^2+|x-y|^\gamma\right)\frac{C}{1+|y|^2}.
\end{eqnarray*}
Hence we get
\begin{multline*}
-\Xi g(\hat x, \hat y)\geq -K_1L_\tau^2\gamma(m+2-\gamma)|\hat x-\hat y|^\gamma \left[\log\left(1+|\hat x|^2\right)+\log\left(1+|\hat y|^2\right)+K_2\right]\\-K_1 C|\hat x-\hat y|^\gamma-2K_1d^2||\tau||_\infty^2.
\end{multline*}
Moreover, by definition of $g$, we have
$$
 (\alpha \hat x +b(\hat x))D_x g(\hat x, \hat y)+(\alpha \hat y +b(\hat y))D_y g(\hat x,\hat y)=D_1+D_2+ D_3
$$
where
\begin{eqnarray*}
D_1&=&\alpha (\hat x-\hat y)DT(\hat x-\hat y)(\psi(\hat x)+\psi(\hat y))+\alpha T(\hat x -\hat y)\left[\hat x \cdot D \psi(\hat x)+y \cdot D\psi(\hat y)\right]\\ &\geq &\alpha(\hat x-\hat y)K_1 \gamma |\hat x -\hat y|^{\gamma-2}(\hat x-\hat y)\left[\log\left(1+|\hat x|^2\right)+\log\left(1+|\hat y|^2\right)+K_2\right]\\ &&+\alpha K_1\left(d^2+|\hat x-\hat y|^\gamma\right)\left[\frac{2|\hat x|^2}{1+|\hat x|^2}+\frac{2 |\hat y|^2}{1+|\hat y|^2}\right]\\
D_2&=&\left[b(\hat x)-b(\hat y)\right]DT(\hat x-\hat y)(\psi(\hat x)+\psi(\hat y))\\ &\geq &-L_b|\hat x-\hat y|K_1 \gamma |\hat x -\hat y|^{\gamma-1}\left(\log\left(1+|\hat x|^2\right)+\log\left(1+|\hat y|^2\right)+K_2\right)\\
D_3&=&b(\hat x)T(\hat x-\hat y)D\psi(\hat x)+b(\hat y)T(\hat x-\hat y)D\psi(\hat y)\geq-CK_1\left(d^2+|\hat x-\hat y|^\gamma\right).
\end{eqnarray*}
The desired inequality of \textbf{Claim 3} is 
ensured by
\begin{equation}\label{eq:E123}
E_1+E_2+E_3>C_F|\hat x-\hat y|^\gamma\left[\log\left(1+|\hat x |^2\right)+\log\left(1+|\hat y|^2\right)+1\right],
\end{equation}  
where 
\begin{multline*}
E_1=K_1\delta d+K_1\delta d^2 K_2+K_1 \delta d^2\left[\log\left(1+|\hat x|^2\right)+\log\left(1+|\hat y|^2\right)+K_2\right]\\+K_1\delta|\hat x-\hat y|^\gamma\left[\log\left(1+|\hat x|^2\right)+\log\left(1+|\hat y|^2\right)+K_2\right]
\end{multline*}
$$
E_2=-2K_1d^2||\tau||_\infty^2-K_1L_\tau^2\gamma(m+2-\gamma)|\hat x-\hat y|^\gamma \left[\log\left(1+|\hat x|^2\right)+\log\left(1+|\hat y|^2\right)+K_2\right]-K_1 C|\hat x-\hat y|^\gamma
$$
\begin{multline*}
  E_3=K_1\alpha\gamma |\hat x-\hat y|^\gamma \left[\log\left(1+|\hat x|^2\right)+\log\left(1+|\hat y|^2\right)+K_2\right]-K_1 C d^2-K_1C|\hat x-\hat y|^\gamma\\-K_1 L_b \gamma |\hat x-\hat y|^\gamma \left[\log\left(1+|\hat x|^2\right)+\log\left(1+|\hat y|^2\right)+K_2\right]+\alpha K_1\left(d^2+|\hat x-\hat y|^\gamma \right)\left[\frac{2|\hat x|^2}{1+|\hat x|^2}+\frac{2|\hat y|^2}{1+|\hat y|^2}\right].
\end{multline*}
Inequality~\eqref{eq:E123} is ensured by
\begin{multline*}
|\hat x-\hat y|^\gamma\left[\log\left(1+|\hat x|^2\right)+\log\left(1+|\hat y|^2\right)+1\right]\left[K_1 \delta -K_1 L_\tau^2\gamma(m+2-\gamma)+K_1\alpha \gamma-K_1\gamma L_b-C_F\right]\\ +K_1\left[\delta d-Cd^2\right]+|\hat x -\hat y|^\gamma \left[(K_2-1)K_1\left(\delta -L_\tau^2\gamma(m+2-\gamma)+\alpha \gamma -\gamma L_b\right)-K_1C\right]>0.
\end{multline*}
We observe that there holds
\begin{equation}\label{eq:321}
|\hat x-\hat y|^\gamma\left[\log\left(1+|\hat x|^2\right)+\log\left(1+|\hat y|^2\right)+1\right]\left[K_1 \delta -K_1 L_\tau^2\gamma(m+2-\gamma)+K_1\alpha \gamma-K_1\gamma L_b-C_F\right]>0
\end{equation}
provided that $K_1\left[\delta -L_\tau^2\gamma(m+2-\gamma)+\alpha \gamma -\gamma L_b\right]>C_F$; in turns, this inequality is ensured by assumption~$(H2)$ and choosing $K_1$ sufficiently large.\\
Moreover there holds
\begin{equation}\label{eq:blue}
|\hat x -\hat y|^\gamma \left[(K_2-1)K_1\left(\delta -L_\tau^2\gamma(m+2-\gamma)+\alpha \gamma -\gamma L_b\right)-K_1C\right]>0
\end{equation}
provided that $\delta-L_\tau^2\gamma(m+2-\gamma)+\alpha \gamma -\gamma L_b>\frac{C}{K_2-1}$; in turns, this inequality is ensured by $(H2)$ and choosing $K_2$ sufficiently large.
Finally,
$K_1\left(\delta d-Cd^2\right)>0$
is ensured by $d<\frac{C}{\delta}$.

In conclusion (recall that $C$ depends only on $m, \tau $ and $b$), by~$(H2)$, we consider: (i) $d_0=\frac{C}{2\delta}$, (ii) $K_1$ sufficiently large to have \eqref{eq:321}, (iii) $K_2$ sufficiently large to have \eqref{eq:blue}.
Hence, we achieve the statement of \textbf{claim 3}. This concludes the proof.
\end{proof}

\section{Preliminary estimates}

In the following lemma we establish some useful estimates for proving Proposition~\ref{prop:ratehgc}, Proposition~\ref{prop:ratehlip} and Proposition~\ref{prop:rchc2}.
Consider a family of bounded functions $u^\eps=u^\eps(x,y)$ and a bounded function $v=v(x)$. Set $M=\sup\left\{||u^\eps||_\infty, ||v||_\infty\right\}$. Let $w=w(y;x)$ be a function which satisfies the two estimates in Lemma \ref{estchiholder} i) and ii). The same result holds if $w$ satisfies the two estimates of Lemma \ref{estchi} i) and ii). Since the proof is the same we give it in the case of Lemma \ref{estchiholder}.
For $\eps, \eta, \sigma >0$ and $\chi(y)=|y|^2$, we consider the function
$$
\psi(x,y):=u^\eps(x,y)-v(x)-\eps w(x;y)-\eps \eta\chi(y)-\sigma |x|^2.
$$
By the boundedness of $u^\eps$ and $v$, by  the at most logarithmic growth at infinity of $w$, the function $\psi$ must have a global maximum point $(\bar x, \bar y)$.  
\begin{lemma}\label{lem:est}
 The following estimates hold for $\eps$ small enough:
\begin{itemize}
\item[i)] $|\bar x|\leq \frac{\sqrt{6M-\eps C_5\log \eta}}{\sqrt{\sigma}}$
\item[ii)] $|\bar y|\leq \sqrt{\frac{12M-2\eps C_5\log\eta}{\eps\eta}}$
\item[iii)] $\sigma |\bar x|^2\leq 5M$, for $\eta=\eps$. 
\end{itemize} 
\end{lemma}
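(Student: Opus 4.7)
The plan is to use the defining property of the global maximum, namely $\psi(\bar x,\bar y)\geq\psi(0,0)$, and then squeeze both bounds out of a single energy-type inequality. Since $\chi(0)=0$ and estimate (i) of Lemma~\ref{estchiholder} gives $|w(0;0)|\leq C_5$, we have $\psi(0,0)=u^\eps(0,0)-v(0)-\eps w(0;0)\geq -2M-\eps C_5$. For the upper bound on $\psi(\bar x,\bar y)$, the terms $u^\eps(\bar x,\bar y)-v(\bar x)$ contribute at most $2M$, the penalty $-\sigma|\bar x|^2-\eps\eta|\bar y|^2$ goes on the left, and the only delicate piece is $-\eps w(\bar y;\bar x)$, which can a priori be as large as $\eps\eta|\bar y|^2$ and would destroy the quadratic penalty in $\bar y$ if treated carelessly.

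The key step is therefore to apply estimate (ii) of Lemma~\ref{estchiholder} with the auxiliary parameter $\tilde\eta=\eta/2$ (rather than $\eta$ itself). This produces
$$
\eps|w(\bar y;\bar x)|\leq \frac{\eps\eta}{2}|\bar y|^2+\eps C_5\log(2C_5)-\eps C_5\log\eta+\frac{\eps\eta}{2},
$$
so that only half of the Lyapunov term is absorbed and the residual $-\tfrac{\eps\eta}{2}|\bar y|^2$ survives on the right-hand side. Combining with the lower bound for $\psi(0,0)$ yields an inequality of the form
$$
\sigma|\bar x|^2+\frac{\eps\eta}{2}|\bar y|^2\leq 4M-\eps C_5\log\eta+R(\eps,\eta),
$$
where $R(\eps,\eta)=\eps C_5+\eps C_5\log(2C_5)+\eps\eta/2$ is an $O(\eps)$ remainder for $\eta\leq 1$.

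From here, (i) and (ii) are immediate: for $\eps$ small enough $R(\eps,\eta)\leq 2M$, so each positive summand on the left is majorised by $6M-\eps C_5\log\eta$, and dividing by $\sigma$ or $\eps\eta$ and taking square roots gives the stated bounds. For (iii), set $\eta=\eps$; since $-\eps\log\eps\to 0^+$ as $\eps\to 0^+$ and $R(\eps,\eps)=O(\eps)$, the right-hand side $4M-\eps C_5\log\eps+R(\eps,\eps)$ can be made smaller than $5M$ by choosing $\eps$ sufficiently small, which yields $\sigma|\bar x|^2\leq 5M$. I do not foresee any real obstacle beyond the choice of the splitting constant $\eta/2$: the rest is bookkeeping, and the only point to be careful about is that the $O(\eps|\log\eps|)$ slack must fit inside the buffer between $4M$ and the stated constants $5M$, $6M$ on the right-hand sides.
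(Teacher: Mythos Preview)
Your argument is correct and uses the same ingredients as the paper---the maximality of $\psi$ and Lemma~\ref{estchiholder}~ii) with the parameter halved to retain a surviving $\tfrac{\eps\eta}{2}|\bar y|^2$ term. The organization differs: you extract a single master inequality from the comparison $\psi(\bar x,\bar y)\geq\psi(0,0)$ and read off all three bounds at once, whereas the paper treats each item separately, comparing with $\psi(0,0)$ for~(i), with $\psi(\bar x,0)$ for~(ii), and with $\psi(0,\bar y)$ for~(iii). In particular, the paper's proof of~(iii) is more indirect: it cancels the $\chi(\bar y)$ term via the comparison with $\psi(0,\bar y)$, bounds $\eps|w(\bar y;0)-w(\bar y;\bar x)|$ using the logarithmic growth of~$w$, and then feeds in the estimate on $|\bar y|$ from part~(ii). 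Your route avoids this bootstrap entirely, since for $\eta=\eps$ the term $-\eps C_5\log\eps$ is already $o(1)$ in your master inequality. Both approaches are valid; yours is slightly more economical.
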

\begin{proof}
First we prove i). By the estimate in Lemma \ref{estchiholder} ii) and for $\eps$ small enough, we have
\begin{eqnarray*}
\psi(x,y)\leq  2M+\eps\left(C_5\log C_5 -C_5 \log \eta+ \eta\right)-\sigma|x|^2 \leq 4M-\eps C_5\log \eta-\sigma |x|^2,
\end{eqnarray*}
hence
$
\psi(\bar x,\bar y)\leq 4M-\eps C_5\log \eta-\sigma |\bar x|^2\leq -2M < \psi(0,0),
$
provided that 
$
\sigma |\bar x|^2> 6M -\eps C_5\log \eta.
$
Then, since $(\bar x, \bar y)$  is a global maximum point, we deduce i).

Now we prove ii). By Lemma \ref{estchiholder} ii), for $\eps$ small enough, we have
\[
\psi(\bar x, \bar y)
\leq 2M-\eps w(\bar y, \bar x)-\frac{\eps \eta}{2}|\bar y|^2-\frac{\eps \eta}{2}|\bar y|^2-\sigma |\bar x|^2\leq 4M-\eps C_5\log \eta -\frac{\eps \eta}{2}|\bar y|^2-\sigma |\bar x|^2.
\]
Hence we have
$
\psi(\bar x, \bar y)\leq 4M-\eps C_5 \log \eta -\frac{\eps \eta}{2}|\bar y|^2-\sigma |\bar x |^2\leq -2M-\sigma |\bar x|^2< \psi(\bar x, 0)
$
provided that 
$
\frac{\eps \eta}{2}|\bar y|^2> 6M-\eps C_5 \log \eta.
$

Now we prove iii). Since $\psi(\bar x, \bar y)\geq \psi(0, \bar y)$, we have
$$
u^\eps(\bar x, \bar y)-v(\bar x)-w(\bar y; \bar x)-\eps \eta \chi(\bar y)-\sigma|\bar x|^2\geq u^\eps(0,\bar y)-v(0)-\eps w(\bar y;0)-\eps \eta \chi(\bar y).
$$
By Lemma \ref{estchiholder} i) and taking $\eta=\eps$, we get
\begin{eqnarray*}
\sigma |\bar x|^2&\leq& 4M+\eps (w(\bar y, 0)-w(\bar y,\bar x))\leq 4M+2\eps C_5\left(1+\log\left(1+|\bar y|^2\right)\right)\\
&\leq &4M+2\eps C_5\left(1+\log\left(\frac{12M-2\eps C_5\log\eps+\eps^{2}}{\eps^{2}}\right)\right)
\end{eqnarray*}
and the proof is completed by noticing that 
\begin{eqnarray*}
\eps\log\left(\frac{12M-2\eps C_5\log\eps+\eps^{2}}{\eps^{2}}\right) =\eps\log\left(12M-2\eps C_5\log\eps+\eps^{2}\right)-2\eps\log\eps=o(1) \mbox{ as } \eps \to 0.
\end{eqnarray*}
\end{proof}
\end{appendices}

\subsection{Acknowledgements}
The authors want to thank prof. Paola Mannucci and prof. Nicoletta Tchou for many discussions and several useful suggestions. The second author was partially supported by GNAMPA-INDAM.


\end{document}